\documentclass[11pt]{amsart}
\usepackage{enumitem, amsmath, amsthm, amsfonts, amssymb, xy,  mathrsfs, graphicx, lscape}
\usepackage[usenames, dvipsnames]{color}
\usepackage[margin=1in]{geometry}
\usepackage{tikz}
\usepackage{pgfplots}
\usepackage{mathtools}
\usepackage{comment}
\usetikzlibrary{matrix,arrows, calc}
\pgfplotsset{compat=1.17}
\usepackage{tikz-cd}
\usepackage{xcolor}
\usepackage{stackengine}
\usepackage{graphicx}

\theoremstyle{plain}

\newtheorem{thm}{Theorem}[section]
\newtheorem*{thm*}{Theorem}
\newtheorem{cor}[thm]{Corollary}
\newtheorem{lem}[thm]{Lemma}
\newtheorem{claim}[thm]{Claim}
\newtheorem{prop}[thm]{Proposition}

\newtheorem{conj}[thm]{Conjecture}
\theoremstyle{definition}
\newtheorem{defn}[thm]{Definition}

\theoremstyle{remark}

\newtheorem{rmk}[equation]{Remark}
\newenvironment{remark}[1][]{\begin{rmk}[#1] }{\popQED \end{rmk}}
\newtheorem{eg}[equation]{Example}

\newtheorem{ques}[equation]{Question}

\usepackage{bbm}

\usepackage[colorlinks=true, pdfstartview=FitV, linkcolor=black, citecolor=black, urlcolor=black]{hyperref}

\newcommand{\Z}{\mathbb{Z}}
\newcommand{\C}{\mathbb{C}}
\newcommand{\N}{\mathbb{N}}

\renewcommand{\phi}{\varphi}
\renewcommand{\emptyset}{\varnothing}

\makeatletter
\def\Ddots{\mathinner{\mkern1mu\raise\p@
\vbox{\kern7\p@\hbox{.}}\mkern2mu
\raise4\p@\hbox{.}\mkern2mu\raise7\p@\hbox{.}\mkern1mu}}
\makeatother

\numberwithin{equation}{section}

\usepackage[utf8]{inputenc}

\usepackage[english]{babel}
\usepackage{csquotes}
\usepackage{mathrsfs}
\usepackage{comment}
\usepackage{tikz-cd}

\usepackage{xcolor}

\usepackage{hyperref}

\hypersetup{colorlinks,
    linkcolor={red!50!black},
    citecolor={blue!80!black},
    urlcolor={blue!80!black}}
\usepackage{float}

\numberwithin{equation}{section}

\renewcommand{\phi}{\varphi}

\renewcommand{\kappa }{\varkappa}

\newcommand{\lab}{{\bf Lab}}

\newcommand{\cL}{\mathcal{L}}
\newcommand{\M}{\mathcal M}

\newcommand{\ra}{\rightarrow}

\newcommand{\A}{\mathcal A}

\newcommand{\cN}{\mathcal{N}}
\newcommand{\cM}{\mathcal{M}}

\theoremstyle{plain} 
\usepackage{hyperref}
\newtheorem*{genericthm*}{\thistheoremname}
\newcommand{\thistheoremname}{???}
\newcounter{genericthm}

\newenvironment{namedthm*}[1]
  {\renewcommand{\thistheoremname}{#1}%
   \refstepcounter{genericthm}%
   \begin{genericthm*}}
  {\end{genericthm*}}

\newtheorem{main}{Theorem}

\newtheorem{thma}[main]{Theorem}

\newtheorem{mcor}[main]{Corollary}

\title{Amenable absorption in von Neumann algebras of hyperbolic groups}

\author{Juan Felipe Ariza Mej\'{i}a}
\address{Department of Mathematics, The University of Iowa, 14 MacLean Hall,  Iowa City, IA 52242, USA}
\email{juanfelipe-arizamejia@uiowa.edu}

\author{Ionu\c{t} Chifan}
\address{Department of Mathematics, The University of Iowa, 14 MacLean Hall, Iowa City, IA 52242, USA}
\email{ionut-chifan@uiowa.edu}

\author{Adriana Fern\'{a}ndez Quero}
\address{Department of Mathematics, KU Leuven, Celestijnenlaan 200B, 3001 Leuven, Belgium}
\email{adriana.fernandeziquero@kuleuven.be}

\author{Adrian Ioana}
\address{Department of Mathematics, University of California San Diego, 9500 Gilman Drive, La Jolla, CA 92093, USA}
\email{aioana@ucsd.edu}

\begin{document}

\begin{abstract}
We prove that the von Neumann algebra $\cL(G)$ associated with any hyperbolic group $G$ satisfies the following \emph{amenable absorption property}: for any infinite maximal amenable subgroup $H \leqslant G$ and any amenable von Neumann subalgebra $\mathcal{Q} \subset \cL(G)$ with diffuse intersection with $\cL(H)$, one must have $\mathcal{Q} \subset \cL(H)$. This strengthens a result of Boutonnet and Carderi 
\cite{BC2}. 

We also establish similar amenable absorption results for the broader class of acylindrically hyperbolic groups, including relatively hyperbolic groups, mapping class groups, and limit groups.
\end{abstract}

\maketitle

\section{Introduction}

Amenable von Neumann algebras represent the most fundamental and best understood class of von Neumann algebras. This class has been studied extensively since the pioneering work of Murray and von Neumann \cite{MvN43}, who established the uniqueness of the hyperfinite II$_1$ factor. Connes' celebrated work \cite{Co76} characterized hyperfinite von Neumann algebras as precisely those that are amenable. As a result, every subalgebra of an amenable tracial von Neumann algebra is hyperfinite. In contrast, no analogous structural description exists for non-amenable von Neumann algebras.


To address this shortcoming, given a non-amenable II$_1$ factor, one studies the structure of its amenable subalgebras. In the 1960s, Kadison asked whether any maximal amenable subalgebra of a II$_1$ factor is necessarily a factor. Popa answered this question in the negative in his breakthrough work \cite{Po83}. To do so, he introduced a powerful method based on asymptotic orthogonality within the ultraproduct framework to prove maximal amenability results for tracial von Neumann algebras. Remarkably, Popa used this technique to show that the generator MASA $\cL(\langle a\rangle)$ in any free group factor $\cL(\mathbb{F}_n)$ is maximal amenable. This asymptotic orthogonality property, along with its subsequent refinements, has since become central to the study of maximal amenability; see, for instance, \cite{She,Fan,CFRW,Gao,Ho14,Bro,KEP24}.


The study of maximal amenability, and, more generally, amenable absorption phenomena, has witnessed significant progress over the past decade.  By combining techniques from geometric group theory with the analysis of central sequences, Boutonnet and Carderi \cite{BC2} showed that if a group $G$ is relatively hyperbolic with respect to a subgroup $H$, then the inclusion $\cL(H)\subset \cL(G)$ is maximal Gamma in the sense of Murray--von Neumann. Combined with Ozawa's solidity theorem \cite{Oz04}, this implies that whenever $G$ is hyperbolic and $H\subset G$ is a maximal amenable subgroup, the von Neumann algebra $\cL(H)$ is maximal amenable inside $\cL(G)$, thereby answering a question posed by Houdayer \cite[Problème 3.13]{Ho13}. In subsequent work \cite{BC1}, Boutonnet and Carderi introduced a new approach to maximal amenability based on the analysis of invariant measures and extensions of traces. Centered around the notion of a singular subgroup, 
their methods provide general criteria for maximal amenability. In particular, they recover their earlier results and yield new examples arising from amalgamated free product groups and related constructions.

Around the same time, Houdayer \cite[Theorem A]{Ho14} established a general Gamma stability theorem for free product von Neumann algebras by extending Popa's asymptotic orthogonality techniques to the free product setting. Shortly thereafter, Houdayer and Ueda \cite[Theorem A]{HU15} completely resolved the problem of maximal amenability for arbitrary free product von Neumann algebras, proving that if $\cM=\cM_1 * \cM_2$ and $\cM_1$ is amenable, then  $\cM_1\subset \cM$ is maximal amenable.

Building on techniques introduced in \cite{BC1}, Ozawa \cite[Theorem]{Oz15} provided a new proof of Houdayer's amenable absorption theorem \cite[Theorem 4.1]{Ho14} for tracial free product von Neumann algebras. Later, Boutonnet and Houdayer \cite[Main Theorem]{BH16} established a broad amenable absorption theorem for arbitrary amalgamated free products of von Neumann algebras through a detailed analysis of non-normal conditional expectations. Their result simultaneously generalized \cite[Corollary B]{BC1}, \cite[Corollary B]{HU15}, and \cite[Theorem 3.3]{leary}. 

More recently, Hayes, Jekel, Nelson, and Sinclair \cite[Theorem A]{HJNS} obtained a new proof of amenable absorption in the free product setting via free entropy techniques, while Houdayer and Ioana \cite[Theorem E]{HI23} found an alternative proof of \cite[Main Theorem]{BH16} in the tracial case.

In connection with these results,  it was shown in \cite[Theorem 5.8]{AMAKCK} that for a group $G$ with a hyperbolically embedded subgroup $H$, any abelian subalgebra $\mathcal{A} \subset \mathcal{L}(G)$ whose normalizer intersects $\mathcal{L}(H)$ diffusely must be absorbed into $\mathcal{L}(H)$. This property, which mirrors an older principle discovered 
in \cite{IPP05} in the context of amalgamated free products, is used in an essential way to derive the main computation of the one-sided fundamental semigroup of a class of II$_1$ factors arising from property (T) relatively hyperbolic groups.

\subsection{Main Results}
In this paper, we obtain a new amenable absorption result for von Neumann algebras of groups which admit hyperbolically embedded subgroups \cite[Definition 2.1]{DGO}. Note that a countable group $G$ contains a proper infinite hyperbolically embedded subgroup if and if it is acylindrically hyperbolic \cite[Theorem 1.2]{Osi16}.

\begin{thma}\label{amenableabs1}
Let $G$ be a group with an infinite hyperbolically embedded subgroup $H$. Then any amenable von Neumann subalgebra $\mathcal{Q} \subset \mathcal{L}(G)$ such that $\mathcal{Q} \cap \mathcal{L}(H)$ is diffuse satisfies $\mathcal{Q} \subset \mathcal{L}(H)$.
\end{thma}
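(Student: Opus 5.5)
The strategy follows the Boutonnet--Carderi / Ozawa approach to amenable absorption through invariant means and singular subgroups, combined with the known absorption result for abelian subalgebras. Set $\mathcal{A}=\mathcal{Q}\cap\cL(H)$, which is diffuse by hypothesis. Since $\mathcal{Q}$ is amenable, there is a $\mathcal{Q}$-central state $\phi$ on $\mathbb{B}(\ell^2G)$ with $\phi|_{\cL(G)}=\tau$ (an $\cL(G)$-hypertrace relative to $\mathcal{Q}$). Restricting $\phi$ to $\ell^\infty(G)$ gives a finitely additive measure $\mu$ on $G$. The plan is to show that $\mu$ concentrates near $H$, and then to convert this concentration into the containment $\mathcal{Q}\subset\cL(H)$ by controlling the Fourier coefficients of elements of $\mathcal{Q}$.

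First, I would use the geometry of the hyperbolic embedding. By \cite{DGO}, $G$ acts on the Cayley graph $\Gamma(G,X\sqcup H)$, which is hyperbolic, and $H$ is ``isolated'': elements far from $H$ in the relative metric induce a ping-pong/projection behaviour. From this I would extract the following key geometric fact, which plays the role of a singularity or mixing statement. For every $\varepsilon>0$ there is a finite set $F\subset G$ such that for $g\notin HF$ (or outside a finite union of cosets $HfH$, depending on the precise formulation), the sets $hg$ for $h\in H$ large escape in a controlled way. Concretely, I would aim for the statement that $\cL(H)\subset\cL(G)$ is \emph{mixing} relative to double cosets: for $a_n\in\mathcal{A}$ going weakly to $0$ and $x,y\perp\cL(H)$, one has $\|E_{\cL(H)}(x a_n y)\|_2\to0$. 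This is the ingredient used in \cite[Theorem 5.8]{AMAKCK}, and it follows from almost malnormality, since $H\cap gHg^{-1}$ is finite for $g\notin H$, together with finiteness of the relevant double-coset intersections.

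Second, I would run an Ozawa-style argument \cite{Oz15}. Because $\mathcal{A}$ is diffuse and $\phi$ is $\mathcal{A}$-central, $\phi$ is invariant under conjugation by unitaries $u\in\mathcal{A}$ tending weakly to $0$. Consider the projection $P$ onto $\ell^2(G\setminus H)$, or rather onto the cosets $\ell^2(gH)$ for $g\notin H$. Hyperbolic embeddedness should give a ``Bowditch-type'' partition of $G\setminus H$ into pieces $\Omega_i$ such that $u\Omega_i$ is almost disjoint from $\Omega_i$ for $u$ supported far out in $H$. Invariance of $\phi$ under $\mathcal{A}$ then forces $\phi(P)=0$ (the singularity argument of \cite{BC1}), so $\mu$ is supported on $H$ up to finite translates. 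Using $\phi(x^*P_H x)$ for $x\in\mathcal{Q}$, together with $\mathcal{Q}$-centrality, gives $\phi(x^*P_Hx)=\phi(P_H x x^*)$-type identities. Combined with $\phi|_{\cL(G)}=\tau$, these yield $\|x\|_2=\|E_{\cL(H)}(x)\|_2$, i.e. $x\in\cL(H)$.

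The main obstacle is the second step: producing the almost-invariant partition of $G\setminus H$ so that $\mathcal{A}$-centrality (not merely $\cL(H)$-centrality) kills the mass off $H$. A diffuse $\mathcal{A}\subset\cL(H)$ is not a group algebra, so I would first reduce via the mixing property to showing that for any $\xi$ in the range of $P$, the vectors $u\xi u^*$ become asymptotically orthogonal. I would try to establish this using the finite-double-coset structure (for $g\notin H$, $|H\cap gHg^{-1}|<\infty$, with the uniform version coming from properness of the relative metric on $H$). If this direct approach fails, a fallback is to pass through a maximal abelian $\mathcal{A}_0\subset\mathcal{A}$, apply \cite[Theorem 5.8]{AMAKCK} to get $\mathcal{N}_{\mathcal{Q}}(\mathcal{A}_0)''\subset\cL(H)$, and conclude using the fact that amenable (hyperfinite) $\mathcal{Q}$ is generated by normalizers of a suitable Cartan-like subalgebra chosen to contain a diffuse piece of $\mathcal{A}$.
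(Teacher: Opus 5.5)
There is a genuine gap, and it sits exactly at the step you flag as the main obstacle. The only geometric input you actually extract from hyperbolic embeddedness is almost malnormality of $H$, i.e.\ mixing of $\cL(H)\subset\cL(G)$. This provably cannot suffice. The group $\mathbb{Z}$ is malnormal in $\mathbb{Z}_2\wr\mathbb{Z}$, so $\cL(\mathbb{Z})\subset\cL(\mathbb{Z}_2\wr\mathbb{Z})$ is a mixing inclusion. Yet $\mathcal Q=\cL(\mathbb{Z}_2\wr\mathbb{Z})$ is amenable, contains $\cL(\mathbb{Z})$, and is not absorbed; a F{\o}lner hypertrace there even gives $\phi(P_{\mathbb Z})=0$.

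Two related problems. Asymptotic orthogonality of the vectors $u\xi u^*$ is a $\|\cdot\|_2$-statement and gives no control over a non-normal state such as $\phi$. And the ``almost disjoint translates'' heuristic does not apply to $u\in\mathscr U(\mathcal A)$, because $\mathrm{Ad}(u)$ does not send $P_\Omega$ to any $P_{\Omega'}$ when $u$ is not a group unitary.

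What is missing is an \emph{operator-norm} decay estimate, which is the heart of the paper (Theorem \ref{normconv0}). Let $\mathcal A$ be a diffuse abelian subalgebra of $\mathcal Q\cap\cL(H)$ and $g\notin H$. One finds $a_i\in\mathscr U(\mathcal A)$ with $\|T_n\|\leq 2/\sqrt n$, where $T_n=\frac1n\sum_{i=1}^n a_iu_ga_i^*\otimes\overline{a_iu_ga_i^*}$. Two ingredients go into this:
\begin{itemize}
\item The uniform asymptotic freeness of Corollary \ref{nontrivial}: a single finite $L\subset H$ with $g_1h_1\cdots g_\ell h_\ell\neq 1$ for \emph{all} $\ell$, $g_i\in\{g^{\pm1}\}$, $h_i\in H\setminus L$. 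Almost malnormality is essentially only the case $\ell=2$.
\item Lyapunov's theorem, used to choose the $a_i$ with $\tau(a_i^*a_ju_h)=0$ for all $h\in L$ and $i\neq j$.
\end{itemize}
The second device is what replaces your ``group elements far out in $H$'': the products $a_i^*a_j$ are supported on $H\setminus L$. Hence in the moment expansion of $\tau((T_n^*T_n)^m)$ only noncrossing complete cancellations survive, which gives the Catalan bound.

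Once this estimate is available, amenability finishes immediately. Set $u_i=a_iu_ga_i^*$ and identify $\overline{x}$ with $x^{\rm op}$. Then $\|\mathbb E_{\mathcal Q}(u_g)\|_2^2=\frac1n\sum_i\tau(\mathbb E_{\mathcal Q}(u_i)\mathbb E_{\mathcal Q}(u_i)^*)\leq\|\frac1n\sum_i\mathbb E_{\mathcal Q}(u_i)\otimes\mathbb E_{\mathcal Q}(u_i)^{\rm op}\|\leq\|T_n\|\to 0$. Equivalently, apply Ozawa's lemma to your $\phi$. Your end-game, deducing $\|x\|_2=\|\mathbb E_{\cL(H)}(x)\|_2$ from $\phi(P_H)=1$, is correct, but you have no route to $\phi(P_H)=1$.

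The fallback fails as well. Getting $\mathcal N_{\mathcal Q}(\mathcal A_0)''\subset\cL(H)$ for diffuse $\mathcal A_0\subset\cL(H)$ is fine; it follows from mixing alone. But the asserted fact, that an amenable $\mathcal Q$ has a Cartan subalgebra containing a diffuse piece of a prescribed diffuse $\mathcal A$, is false. Take $R=\cL(\mathbb Z_2\wr\mathbb Z)$ and $\mathcal A=\cL(\mathbb Z)$. Any Cartan $C$ with $C\cap\mathcal A$ diffuse satisfies $C\subset(C\cap\mathcal A)'\cap R\subset\mathcal A$ by mixing, so $C=\mathcal A$, which is singular. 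Moreover, in your setting the same mixing argument shows that such a Cartan subalgebra of $\mathcal Q$ exists if and only if $\mathcal Q\subset\cL(H)$. So this route is circular: the content has to come from the stronger asymptotic freeness of $H$ in $G$, not from hyperfiniteness of $\mathcal Q$.
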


Examples of groups $G$ with an infinite subgroup $H$ that is hyperbolically embedded (in symbols, $H \hookrightarrow_h G$) include the following:

\begin{enumerate}
    \item  All non-elementary hyperbolic groups $G$ and every subgroup $H=E(a)\hookrightarrow_h G$, where $a\in G$ is any element of infinite order and
\[
E(a)=\{g\in G \,:\, ga^n g^{-1}=a^{\pm n}\text{ for some } n>0\}
\]
is the maximal virtually cyclic subgroup of $G$ containing $a$ \cite{DGO}. More generally, we have $H\hookrightarrow_h G$ for any quasiconvex, almost malnormal subgroup $H<G$, by \cite[Theorem 7.11]{Bo12}, Osin's characterization of relative hyperbolicity via linear relative Dehn functions, and \cite[Definition 4.25 and Theorem 4.24]{DGO}.

  \vspace{0.5mm}
  \item All groups $G=\pi_1(M)$, where $M=\mathbb H^n/\Gamma$ is a finite-volume, noncompact real hyperbolic $n$-manifold, and any of their cusp subgroups $H\cong \mathbb Z^{n-1}$ satisfy $H\hookrightarrow_h G$. Similarly, all groups $G=\pi_1(M)$, where $M=\mathbb C\mathbb H^n/\Gamma$ is a finite-volume complex hyperbolic $n$-manifold, and any of their cusp subgroups $H$ satisfy $H\hookrightarrow_h G$, where $H$ is virtually isomorphic to the integral Heisenberg group $H_{2n-1}(\mathbb Z)$ \cite{Osi06}.  
   
 \item Mapping class group, $G = \mathcal{MCG}(\Sigma)$, where $\Sigma$ is a closed orientable surface. This group consists of isotopy classes of orientation-preserving homeomorphisms of $\Sigma$. For every pseudo-Anosov $a \in G$ (a homeomorphism that acts on a closed surface by stretching and shearing it), the associated maximal virtually cyclic subgroup $H = E(a)$ is hyperbolically embedded in $G$, \cite[Theorem 2.19]{DGO}.

    \vspace{0.5mm}

    \item Outer automorphisms of free groups, $G = \mathrm{Out}(\mathbb{F}_n)$, where $n \geq 2$. In this case, for any irreducibe with irreducible powers element $g \in G$ \cite[Section 4]{LM} the virtually cyclic group $H = E(g)$ is hyperbolically embedded in $G$, \cite[Theorem 2.20]{DGO}.


    \vspace{0.5mm}

    \item Groups acting property on a proper $\text{CAT}(0)$ space. Let $g\in G$ be a rank one isometry, then $g$ is contained in a unique maximal virtually cyclic subgroup which is hyperbolically embedded in $G$, \cite{Sisto}.

    \vspace{0.5mm}

    \item   
    All amalgamated free groups and HNN-extensions obtained in \cite[Theorem 2.25]{DGO} via standard combinations theorems: \begin{enumerate}\item For any groups $H \hookrightarrow_h A$ and $H \hookrightarrow_h B$, consider their amalgamated free product $G=A\ast_H B$ and hence we have $H\hookrightarrow_h G$. \item For any groups $\{K,H\}\hookrightarrow_h A$ and monomorphism  $\iota: K \ra H$, consider the HNN-extension $G =\langle A, t\,:\, tkt^{-1}=\iota(k) \text{ for all }k\in K \rangle $   
    and hence we have $H \hookrightarrow_h G$.
    
    \end{enumerate}
    \vspace{0.5mm}
    
    \item Directly indecomposable, centerless right-angled Artin groups $G=A(\Gamma)$ and the unique maximal virtually cyclic subgroup $H=E(g)\hookrightarrow_h G$ containing an element
$g\in A(\Gamma)$ that acts as a rank-$1$ isometry on the universal cover of the associated Salvetti complex \cite{Osi16,KatoOguni}.

\end{enumerate}

Theorem \ref{amenableabs} immediately implies that if $H$ is virtually abelian, then any type II$_1$ amenable von Neumann subalgebra of $\mathcal{L}(G)$ must have a completely atomic intersection with $\mathcal{L}(H)$. Additional consequences of Theorem \ref{amenableabs1} explore the behavior in corners by nonzero projections (Corollary \ref{absorptioncorner}) and normalizers (Corollary \ref{absorptionnormalizer}); for additional results, we refer the reader to Section \ref{applicationsam}.
 
In particular, Theorem \ref{amenableabs1} allows us to give an alternative proof of the following result of Boutonnet and Carderi \cite{BC2} in the setting of hyperbolically embedded subgroups. 

\begin{mcor}\label{maximalamencor}
Let $H < G$ be groups, where $H$ is infinite, amenable, and hyperbolically embedded in $G$. Then $\mathcal{L}(H)$ is a maximal amenable von Neumann subalgebra of $\mathcal{L}(G)$. 
\end{mcor}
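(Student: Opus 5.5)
Corollary \ref{maximalamencor} follows directly from Theorem \ref{amenableabs1}. Let $\mathcal{Q}$ be an amenable von Neumann subalgebra of $\mathcal{L}(G)$ with $\mathcal{L}(H) \subset \mathcal{Q}$. We must show $\mathcal{Q} = \mathcal{L}(H)$.

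First, $\mathcal{L}(H)$ is amenable, since $H$ is an amenable group. So it is a legitimate candidate for maximality, and it suffices to show that no strictly larger amenable subalgebra exists.

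Second, we check the hypothesis of Theorem \ref{amenableabs1}. We have $\mathcal{Q} \cap \mathcal{L}(H) = \mathcal{L}(H)$, so we need $\mathcal{L}(H)$ to be diffuse. This holds because $H$ is infinite. The canonical unitaries $(u_h)_{h\in H}$ satisfy $\tau(u_h)=0$ for $h\neq e$. Taking any sequence of distinct elements $h_n \in H$, we get $u_{h_n} \to 0$ weakly. A von Neumann algebra with a sequence of unitaries tending weakly to $0$ has no minimal projections: if $p$ were a minimal projection, then $p u_{h_n} p = \lambda_n p$ and $\tau(p u_{h_n}) \to 0$, which is incompatible with $pu_{h_n}^*u_{h_n}p = p$ after the usual argument via $E_{p\mathcal{L}(H)p}$. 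This is the standard criterion recalled in the preliminaries.

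Third, Theorem \ref{amenableabs1} applied to $\mathcal{Q}$ gives $\mathcal{Q} \subset \mathcal{L}(H)$. Hence $\mathcal{Q} = \mathcal{L}(H)$, which is maximal amenability.

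There is no real obstacle here; the only point needing care is the diffuseness of $\mathcal{L}(H)$ for infinite $H$. If one prefers to avoid the minimal-projection argument, one can instead invoke the standard fact that $\mathcal{L}(H)$ is diffuse if and only if $H$ is infinite.
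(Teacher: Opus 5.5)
Your proof is correct and is exactly the argument the paper intends: if $\mathcal{Q}\supset\mathcal{L}(H)$ is amenable, then $\mathcal{Q}\cap\mathcal{L}(H)=\mathcal{L}(H)$ is diffuse because $H$ is infinite, so Theorem \ref{amenableabs1} forces $\mathcal{Q}=\mathcal{L}(H)$. Your minimal-projection sketch is loose as written, and the paper's preliminaries recall no such criterion; the actual contradiction comes from $\mathcal{L}(H)z(p)$ being a finite-dimensional matrix algebra, on which unitaries cannot tend weakly to $0$. The standard fact you fall back on ($\mathcal{L}(H)$ is diffuse iff $H$ is infinite) is all that is needed.
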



To establish Theorem \ref{amenableabs1}, we follow an approach based on an idea introduced by Ozawa in \cite{Oz15}. By combining an asymptotic freeness result for peripheral subgroups from \cite[Proposition 4.14]{DGO} (see also \cite[Theorem 2.8]{CDS}) with additional operator algebraic techniques, including the spectral formula and the classical Lyapunov convexity theorem, we show that for every $x \in \cL(G)\ominus \cL(H)$, there exists a sequence of unitaries $(u_i)_i \subset \cL(H)$ supported sufficiently deep in $H$ such that the averages
\[
\frac{1}{n}\sum_{i=1}^n u_i x u_i \otimes (u_i x u_i)^{\rm op}
\]
converge to zero in norm. 
Applying the methods from \cite{Oz15} then yields the desired amenable absorption.
We note in passing that our arguments involve moment computations reminiscent of classical techniques from free probability theory.

\vspace{1mm}

Next, we note that the results from \cite[Proposition 4.14]{DGO} (see also \cite[Theorem 2.8]{CDS}), which were essential to establishing our main theorems, can also be applied to prove the following standalone asymptotic freeness result.


\begin{thma}\label{amalgfreeprodinLGw} Let $G$ be a group with a hyperbolically embedded subgroup $H$ and $\omega$ a free ultrafilter on $\N$. Then the von Neumann subalgebra of  $\cL(G)^\omega$ generated by  $\cL(G)$ and $ \cL(H)^{\omega}$ decomposes as an amalgamated free product: 
\begin{equation*}\label{L(G)L(H)warefree1}
\cL(G)\vee\cL(H)^{\omega}\cong\cL(G)\ast_{\cL(H)}\cL(H)^{\omega}.
\end{equation*}
\end{thma}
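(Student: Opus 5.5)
The plan is to check freeness with amalgamation directly on a weakly dense $*$-subalgebra. Write $E:\cL(G)^\omega\to\cL(H)^\omega$ for the trace-preserving conditional expectation; on $\cL(G)$ it restricts to $E_{\cL(H)}$, because $\cL(G)$ and $\cL(H)^\omega$ form a commuting square inside $\cL(G)^\omega$. Showing that $\cL(G)\vee\cL(H)^\omega\cong\cL(G)\ast_{\cL(H)}\cL(H)^\omega$ then amounts to one fact: $\cL(G)$ and $\cL(H)^\omega$ are free with amalgamation over $\cL(H)$ with respect to the conditional expectation $E_{\cL(H)}$ defined on their join.

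By linearity and density it suffices to treat alternating products
\[
x_1 y_1 x_2 y_2\cdots x_n y_n ,
\]
where:
\begin{itemize}
\item each $x_j=u_{g_j}$ with $g_j\in G\setminus H$ (in the middle positions), or more generally $x_j\in\cL(G)\ominus\cL(H)$ of finite support in $G\setminus H$;
\item each $y_j=(y_j^{(k)})_k\in\cL(H)^\omega\ominus\cL(H)$.
\end{itemize}
For such a product we must show $E_{\cL(H)}(x_1y_1\cdots)=0$, equivalently $\tau(a\,x_1y_1\cdots x_ny_n)=0$ for all $a\in\cL(H)$ (or $a\in\cL(G)$).

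First, reduce the $y_j$ to something well behaved. Being orthogonal to $\cL(H)$ inside $\cL(H)^\omega$ means $y_j^{(k)}\to 0$ weakly along $\omega$. So we may choose representatives whose Fourier coefficients $\tau(y_j^{(k)}u_h^*)$ tend to zero for each fixed $h$. After a $\|\cdot\|_2$-small perturbation we can make each $y_j^{(k)}$ finitely supported on $H\setminus F_k$, where $F_k\uparrow H$ is an exhausting sequence of finite sets. This step uses the uniform norm bounds and a Kaplansky-type truncation, with care to control operator norms; alternatively, keep the $y$'s bounded and use the $\|\cdot\|_2$-continuity of multiplication by bounded elements.

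The heart of the argument is combinatorial. By \cite[Proposition 4.14]{DGO} (see also \cite[Theorem 2.8]{CDS}), since $H\hookrightarrow_h G$ there is a finite set $F\subset H$ with the following property: for every $g_1,\dots,g_n\in G\setminus H$ (taken from a fixed finite set) and every $h_1,\dots,h_n\in H\setminus F$, the word $g_1h_1g_2h_2\cdots g_nh_n$ does not lie in $H$. More precisely, it does not lie in a prescribed finite set $a^{-1}$-translate, and the conclusion persists after enlarging $F$ to account for the finitely many group elements appearing in $a$ and the $x_j$.

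Expanding the trace then gives
\[
\tau\bigl(a\,x_1y_1^{(k)}\cdots x_ny_n^{(k)}\bigr)=\sum \text{coefficients}\cdot\delta_{e}\bigl(\text{word}\bigr),
\]
and every word vanishes once $F_k\supset F$. Hence the trace is $0$ along $\omega$.

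The main obstacle is that the $x_j$ and $y_j$ are general elements, not group elements with finite support. I would handle this by a density and continuity argument. Products are jointly $\|\cdot\|_2$-continuous on norm-bounded sets, so it suffices to prove the vanishing for:
\begin{itemize}
\item $x_j$ ranging over finitely supported elements of $\cL(G)\ominus\cL(H)$ (dense in norm-bounded $\|\cdot\|_2$-sense by Kaplansky);
\item $y_j$ as above.
\end{itemize}
The delicate point is that the truncation of $y_j^{(k)}$ to $H\setminus F$ must be uniform in $k$ while keeping operator norms bounded. To avoid operator-norm issues I would expand only one $y$ at a time, inductively in the position: replace $y_j$ by $y_j-P_F(y_j)$, where $P_F(y_j)$ is the projection onto the finite span of $\{u_h\}_{h\in F}$. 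That projection tends to $0$ in norm along $\omega$, since it is a finite sum of vanishing coefficients.

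Finally, the algebraic freeness on dense subalgebras extends to the generated von Neumann algebras by the standard uniqueness of amalgamated free products. This yields the isomorphism $\cL(G)\vee\cL(H)^\omega\cong\cL(G)\ast_{\cL(H)}\cL(H)^\omega$ asserted in Theorem \ref{amalgfreeprodinLGw}.
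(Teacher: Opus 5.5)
Your argument is essentially the paper's proof, namely the implication (1)$\Rightarrow$(2) of Theorem \ref{equivalences} combined with Corollary \ref{nontrivial}: you reduce to $x_j=u_{g_j}$, subtract from each $y_{j,n}$ its part supported on the exceptional finite set (which tends to $0$ in operator norm along $\omega$, being a finite sum of vanishing coefficients), and then note that every remaining group word avoids the identity. Two small loose ends need fixing. First, restricting to words $x_1y_1\cdots x_ny_n$ is justified not by linearity and density but by the cyclic-trace induction of Lemma \ref{amalgam}. Second, Corollary \ref{nontrivial} gives only $g_1h_1\cdots g_nh_n\neq 1$, not $\notin H$; your refined version, avoiding the finite set $\mathrm{supp}(a)^{-1}$, does follow by rewriting $s\,g_1h_1\cdots g_nh_n=1$ as $g_1h_1\cdots g_n(h_ns)=1$ and enlarging $L$ to $L\cup\bigcup_{s\in\mathrm{supp}(a)}Ls^{-1}$.
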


Amenable absorption is closely connected to a deep conjecture  formulated by Peterson and Thom \cite{PT11}, which states that every diffuse amenable subalgebra of a free group factor $\mathcal{L}(\mathbb{F}_r)$, $r > 1$, is contained in a unique maximal amenable von Neumann subalgebra. 
Although several special cases were established over the years \cite{HU15,Ho14,BH16}, the full conjecture was recently resolved by combining breakthroughs in free probability theory \cite{Hay22} and random matrix theory \cite{BC22,BC23}.

More generally, this conjecture is expected to hold for the von Neumann algebras $\cL(G)$ of all hyperbolic groups $G$ (see Conjecture \ref{PetersonThomhyperbolic}). To establish this, it suffices to show that if two von Neumann subalgebras $\mathcal A,\mathcal B \subset \cL(G)$ have a diffuse intersection $\mathcal A \cap\mathcal B$, then the von Neumann algebra they generate, $\mathcal A \vee \mathcal B\subset \mathcal L(G)$ is amenable. Theorem \ref{amenableabs1} provides strong evidence for this conjecture by confirming it when $\mathcal A$ is any amenable subalgebra and $\mathcal B=\cL(H)$ arises from an infinite amenable subgroup $H<G$.

\textbf{Acknowledgments.} The second author is grateful to  Denis Osin for conversations around acylindrically hyperbolic groups during the workshop BIRS 24w5174.  The third author thanks Ilijas Farah, Chris Schafhauser, Gábor Szabó, and Jesse Peterson for helpful discussions regarding conditional expectations at the 2025 M. F. Oberwolfach Workshop on C$^*$-algebras, and especially thanks Jesse Peterson for permission to include the proof.

The first author received support from the NSF Grant DMS-2154637. The second author has been supported by the NSF Grant DMS-2452247. The third author received partial support from the NSF grant DMS-2154637, the CLAS Dissertation Writing Fellowship and the FWO research project G016325N of the Research Foundation Flanders. The fourth author has been supported from the NSF Grants  DMS-2153805 and DMS-2451697.

\textbf{AI tool disclosure.} Gemini was used for English language editing, proofreading, and grammatical corrections. All mathematical content was generated solely by the human authors.


\section{Preliminaries}

\subsection{Tracial von Neumann algebras}

Let $(\cM,\tau)$ be a tracial von Neumann algebra, that is, a pair consisting of a von Neumann algebra $\cM$ and a trace (i.e., faithful normal tracial state) $\tau:\cM\to\C$. For $x\in \cM$, we denote by $\|x\|$ the {\it operator norm} of $x$ and by $\|x\|_2=\tau(|x|^2)^{\frac{1}{2}}$ the {\it $2$-norm} of $x$, where $|x|=(x^*x)^{1/2}$. We denote by $\mathscr{U}(\cM)$ the group of unitaries of $\cM$ and by $(\cM)_1=\{x\in \cM\mid \|x\|\leq 1\}$ the {\it unit ball} of $\cM$. For a von Neumann subalgebra $\cN\subset\cM$, we denote by $\mathbb{E}_{\cN}:\cM\to\cN$ the unique trace-preserving conditional expectation onto $\cN$.

For a nonprincipal ultrafilter $\omega$ on $\N$, we denote by $(\cM^{\omega},\tau^{\omega})$ the tracial ultraproduct von Neumann algebra. This is defined as the quotient $\ell^{\infty}(\N,\cM)/\mathcal{J}$ by the closed ideal $\mathcal{J}\subset\ell^{\infty}(\N,\cM)$ consisting of $x=(x_n)_n$ with $\lim_{n\to\omega}\|x_n\|_2=0$,  endowed with the trace $\tau^\omega$ given by $\tau^\omega(y)=\lim_{n\to\omega}\tau(y_n)$, for every $y=(y_n)_n\in\cM^\omega$. 
We have a natural diagonal inclusion $\iota:\cM\to\cM^{\omega}$ given by $\iota(x)=(x_n)_n$, where $x_n=x$ for all $n$. For 
simplicity, we identify $\cM$ with $\iota(\cM)$.

Let $(\cM_1,\tau_1)$ and $(\cM_2,\tau_2)$ be tracial von Neumann algebras with a common von Neumann subalgebra $B$ such that $\tau_1|_B=\tau_2|_B$. 
The amalgamated free product von Neumann algebra $\cM=\cM_1*_B\cM_2$ is the unique (up to isomorphism) tracial von Neumann algebra $(\cM,\tau)$ which is generated by $\cM_1$ and $\cM_2$ in such a way that the trace $\tau$ satisfies
  $\tau(x_1\cdots x_n)=0$, whenever $n\geq 1$, $i_1,\cdots,i_n\in\{1,2\}$, $i_1\not= i_2\not=\hdots\not=i_n$ and $x_j\in\cM_{i_j}$ satisfy $\mathbb{E}_B(x_j)=0$, for every $1\leq j\leq n$. 
We say that a product of the form $x_1\cdots x_n$, where $x_j\in \cM_{i_j}\ominus B$ and $i_1\neq i_2\neq\hdots\neq i_n$, is a {\it reduced word}.

The following lemma must be known to the experts, but for lack of reference we include a proof.

\begin{lem}\label{amalgam}
        Let $(\mathcal M,\tau)$ be a tracial von Neumann algebra, $\mathcal M_1,\mathcal M_2\subset \mathcal M$ be von Neumann algebras and $B=\mathcal M_1\cap \mathcal M_2$. Assume that $\tau(x_1y_1\cdots x_my_m)=0$, for every $m\geq 1$ and  $x_1,\dots,x_m\in \mathcal M_1,y_1\dots,y_m\in \mathcal M_2$ such that $\mathbb{E}_B(x_1)=\dots=\mathbb{E}_B(x_m)=0$ and $\mathbb{E}_B(y_1)=\dots=\mathbb{E}_B(y_m)=0$.

        Then $\mathcal M_1$ and $\mathcal M_2$ are free with amalgamation over $B$.
\end{lem}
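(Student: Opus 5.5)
The hypothesis only controls alternating words of even length that start in $\mathcal M_1$ and end in $\mathcal M_2$. We need $\tau(z_1\cdots z_n)=0$ for every reduced word, where $n\geq 1$, $z_j\in \mathcal M_{i_j}\ominus B$ and $i_1\neq i_2\neq\cdots\neq i_n$. The plan is to reduce every reduced word to the hypothesis using traciality, and to treat odd-length words separately.

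\textbf{Even length.} Write the word as $z_1\cdots z_{2m}$. If $i_1=1$, it is exactly of the form $x_1y_1\cdots x_my_m$, so its trace vanishes by assumption. If $i_1=2$, then $i_{2m}=1$, and traciality gives
\[
\tau(z_1\cdots z_{2m})=\tau(z_{2m}z_1\cdots z_{2m-1}).
\]
The right-hand side is an alternating word of the required form, so it has trace $0$.

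\textbf{Odd length.} Write the word as $z_1\cdots z_{2m+1}$. Then $i_1=i_{2m+1}$, and cyclic rotation gives
\[
\tau(z_1\cdots z_{2m+1})=\tau(z_{2m+1}z_1 z_2\cdots z_{2m}).
\]
Here $z_{2m+1}z_1$ lies in $\mathcal M_{i_1}$ but need not have zero expectation. Decompose
\[
z_{2m+1}z_1=\mathbb E_B(z_{2m+1}z_1)+w, \qquad \mathbb E_B(w)=0.
\]
The $w$-term is a reduced word of length $2m$, so its trace is $0$ by the even case. For the other term, put $b=\mathbb E_B(z_{2m+1}z_1)\in B$. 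Then $bz_2\in \mathcal M_{i_2}$, because $B\subset \mathcal M_{i_2}$, and $\mathbb E_B(bz_2)=b\,\mathbb E_B(z_2)=0$ since $\mathbb E_B$ is $B$-bimodular. So $b z_2\cdots z_{2m}$ is a reduced word of length $2m-1$. This gives an induction on the length that lowers it by $2$.

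\textbf{Base case.} The base case is length $1$: $\tau(z_1)=\tau(\mathbb E_B(z_1))=0$.

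\textbf{Main obstacle.} The only delicate point is keeping all the expectations straight: that $\mathbb E_B$ here is the trace-preserving expectation of $\mathcal M$ onto $B$, and that it restricts to the corresponding expectations on $\mathcal M_1$ and $\mathcal M_2$. That $B$ is a von Neumann algebra is automatic, since it is the intersection $\mathcal M_1\cap\mathcal M_2$. Beyond this, all moments of reduced words vanish, so $\mathcal M_1\vee\mathcal M_2\cong \mathcal M_1*_B\mathcal M_2$ by the uniqueness in the definition recalled above.
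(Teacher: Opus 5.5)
Your proposal is correct and follows essentially the same route as the paper. Cyclic rotation handles even-length reduced words directly from the hypothesis; for odd length you merge $z_{2m+1}z_1$, split off $\mathbb E_B(z_{2m+1}z_1)$, and conclude by induction. You also make explicit a step the paper leaves implicit: $\mathbb E_B(z_{2m+1}z_1)z_2\in\mathcal M_{i_2}\ominus B$ by $B$-bimodularity of $\mathbb E_B$, so the remaining term is a genuine reduced word of length $2m-1$.
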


\begin{proof}
 To prove the conclusion we have to argue that for every $n\geq 1$, $i_1,\dots, i_n\in\{1,2\}$ such that $i_1\not= i_2\not=\cdots\not=i_n$ and $z_j\in \mathcal M_{i_j}$ such that $\mathbb{E}_B(z_j)=0$, for every $1\leq j\leq n$, we have $\tau(z_1\cdots z_n)=0$.  

 We prove this claim by induction on $n\geq 1$. For $n=1$, the claim is obvious. Assume that the claim holds for some $n\geq 1$. Let $i_1,\dots, i_{n+1}\in\{1,2\}$ such that $i_1\not= i_2\not=\hdots\not=i_{n+1}$ and $z_j\in \mathcal M_{i_j}$ such that $\mathbb{E}_B(z_j)=0$, for every $1\leq j\leq n+1$. If $n+1$ is even, 
 then $i_1\not= i_{n+1}$ and since $\tau(z_1\cdots z_{n+1})=\tau(z_{n+1}z_1\cdots z_n)$, the hypothesis implies that $\tau(z_1\cdots z_{n+1})=0$. If $n+1$ is odd, we have $i_1=i_{n+1}$ and letting $z=z_{n+1}z_1\in \mathcal M_{i_1}$ we get that $$\tau(z_1\cdots z_{n+1})=\tau(zz_2\cdots z_n)=\tau((z-\text{E}_B(z))z_2\cdots z_{n})+\tau(\text{E}_B(z)z_2z_3\cdots z_n).$$
 Since by the induction hypothesis we have $\tau((z-\mathbb{E}_B(z))z_2\cdots z_{n})=\tau(\mathbb{E}_B(z)z_2z_3\cdots z_n)=0$, we conclude that $\tau(z_1\cdots z_{n+1})=0$. This finishes the proof of the inductive step and of the claim.
\end{proof}

We continue by proving the following 
lemma, which is a key step in constructing a sequence of unitaries that are mutually orthogonal with respect to the trace when tested against a fixed finite set. In the proof, we use Lyapunov's convexity theorem, which was also recently used in the resolution of a question of Kadison \cite{HTZ26}.

\begin{lem}\label{oneunitary} Let $\A$ be a diffuse abelian von Neumann algebra with a faithful normal state $\tau$. For any finite set $F \subset \A$ one can find a unitary $u\in \mathscr U(\A)$ satisfying \[\tau(ux)=0,\: \text{ for all }x\in F.\]
\end{lem}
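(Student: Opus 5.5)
Since $\A$ is abelian, I will realize it as $L^\infty(X,\mu)$, where $\mu$ is the probability measure given by $\tau$ (this is possible because $\tau$ is a faithful normal state). Diffuseness of $\A$ means exactly that $\mu$ is non-atomic. Write $F=\{x_1,\dots,x_k\}$ with $x_j\in L^\infty(X,\mu)$. The goal is a measurable $u:X\to\mathbb T$ with $\int u x_j\,d\mu=0$ for every $j$. I will look for $u$ of the form $u=1_{A}-1_{X\setminus A}$ (or, if needed, a combination of a few phases), so the problem becomes choosing a set $A$ with $\int_A x_j\,d\mu=\tfrac12\int_X x_j\,d\mu$ for all $j$. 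With this choice $\int u x_j\,d\mu=2\int_A x_j\,d\mu-\int_X x_j\,d\mu=0$.

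The key tool is Lyapunov's convexity theorem. Consider the vector measure $\nu:A\mapsto \bigl(\int_A x_1\,d\mu,\dots,\int_A x_k\,d\mu\bigr)\in\C^k\cong\mathbb R^{2k}$. Each component is absolutely continuous with respect to the non-atomic measure $\mu$, so $\nu$ is a finite-dimensional non-atomic vector measure. Lyapunov's theorem then says its range $\{\nu(A)\}$ is convex (and compact) in $\mathbb R^{2k}$. The range contains $\nu(\emptyset)=0$ and $\nu(X)$, so by convexity it also contains $\tfrac12\nu(X)$. That gives a measurable $A$ with $\nu(A)=\tfrac12\nu(X)$, and hence $u=1_A-1_{X\setminus A}=2\cdot 1_A-1$ is a self-adjoint unitary in $\A$ satisfying $\tau(ux)=0$ for all $x\in F$. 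By linearity of $x\mapsto\tau(ux)$ the condition also extends to the span of $F$.

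The only real point to check is the translation from the operator-algebraic setting to the measure-theoretic one. A diffuse abelian von Neumann algebra with a faithful normal state is isomorphic to $L^\infty(X,\mu)$ for a non-atomic probability measure $\mu$ with $\tau=\int\cdot\,d\mu$. If separability is not assumed, I would pass to the von Neumann subalgebra generated by $F$ together with a diffuse separable subalgebra (for instance, generated by a copy of $L^\infty[0,1]$ obtained from a diffuse MASA argument). A unitary constructed inside that subalgebra still works. Lyapunov's theorem does not actually need separability: non-atomicity of $\mu$ suffices. So I expect no serious obstacle beyond citing Lyapunov's theorem correctly and verifying that the components of $\nu$ are non-atomic, which holds because $x_j\,d\mu\ll\mu$.
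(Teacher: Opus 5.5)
Your proposal is correct and follows the paper's proof essentially verbatim. You identify $\A$ with $L^\infty(X,\mu)$ for a non-atomic $\mu$, apply Lyapunov's convexity theorem to the vector measure $E\mapsto\left(\int_E x_i\,d\mu\right)_i$ to get a set $E$ with $\Phi(E)=\tfrac12\Phi(X)$, and take $u=\chi_E-\chi_{X\setminus E}$. Your side remark is also right: the paper reduces to a separable subalgebra containing $F$ only so it can work on a standard probability space, and Lyapunov's theorem itself does not need separability.
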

\begin{proof} After replacing $\A$ by a separable diffuse von Neumann subalgebra of $\A$ which contains $F$, we may assume that $\A$ is separable.
Identify $\A$ with $L^\infty(X, \mu)$, where $(X,\mu)$ is a non-atomic standard probability space, such that $\tau(x)= \int_X x d \mu$ for all $x\in L^\infty(X)$. Let $\Sigma$ be the $\sigma$-algebra of measurable subsets of $X$. Write $F = \{x_1, \dots, x_n\}$. Consider the map $\Phi:\Sigma \rightarrow \mathbb C^n$ given by 
\[ \Phi(E) =\left(\int_E x_1 d \mu, \int_E x_2  d\mu , \ldots ,\int_E x_n d\mu\right).\]
Since the  measure $\mu_i(E) := \int_E x_i d \mu$ is absolutely continuous with respect to $\mu$, it is non-atomic, for every $1\leq i\leq n$. Moreover, $\Phi(\emptyset)=(0,...,0)$ and $\Phi(X)= \left ( \int_X x_1d\mu,\ldots ,\int_X x_n d\mu\right )$. By Lyapunov's convexity theorem the image of $\Phi$ is a convex (and compact) subset of $\mathbb C^n$. In particular, we can find $E\in\Sigma$ such that $\Phi(E)=\frac{1}{2}\Phi(X)$. This entails that 
\begin{equation}\label{scaling} \int_E x_i d\mu = \frac{1}{2}\int_X x_i d\mu,\:\text{ for all } i\in\{1,\ldots,n\}.\end{equation}
If $u=\chi_E -\chi_{X\setminus E}\in \mathscr U(\A)$, then \eqref{scaling} implies that for every $1\leq i\leq n$ we have
\begin{equation*}\begin{split}\tau(ux_i)&= \int_E x_i d\mu - \int_{X\setminus E} x_id\mu= 2\int_E x_i d\mu -\int_X x_i d\mu =0. \end{split}\end{equation*}
This finishes the proof of our statement.
\end{proof}

\begin{thm}\label{orthunitaries}
Let $\A$ be a diffuse abelian von Neumann algebra with a faithful normal state $\tau$. Let $F \subset \A$ be a finite set.
Then there exists a sequence of unitaries $u_n \in \A$ such that
\[\tau(u^*_nu_mx)=0,\: \text{ for all } x \in F \text{ and }n\neq m.\]
\end{thm}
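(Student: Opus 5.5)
We construct the sequence $(u_n)_n$ inductively, using Lemma \ref{oneunitary} at each stage with a finite set that grows with $n$. Set $u_1=1$. Suppose unitaries $u_1,\dots,u_{n}\in\mathscr U(\A)$ have been chosen so that $\tau(u_j^*u_kx)=0$ for all $x\in F$ and all $j\neq k\leq n$. Consider the finite set
\[
F_n=\{u_j^*x \,:\, x\in F,\ 1\leq j\leq n\}\cup\{u_j x^* \,:\, x\in F,\ 1\leq j\leq n\}\subset \A .
\]
By Lemma \ref{oneunitary}, applied to the diffuse abelian algebra $\A$ and the finite set $F_n$, there is a unitary $u_{n+1}\in\mathscr U(\A)$ with $\tau(u_{n+1}y)=0$ for all $y\in F_n$.

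Next we verify the new orthogonality relations. Since $\A$ is abelian, for $j\leq n$ and $x\in F$ we get
\[
\tau(u_j^*u_{n+1}x)=\tau(u_{n+1}u_j^*x)=0,
\]
because $u_j^*x\in F_n$. For the relation with the indices reversed,
\[
\tau(u_{n+1}^*u_jx)=\tau(u_{n+1}^*u_jx)=\overline{\tau(x^*u_j^*u_{n+1})}=\overline{\tau(u_{n+1}u_j^*x^*)}.
\]
Here we used that a state satisfies $\tau(a^*)=\overline{\tau(a)}$, together with commutativity. So the set should also contain the elements $u_j^*x^*$. Accordingly, we take
\[
F_n=\{u_j^*x,\ u_j^*x^* \,:\, x\in F,\ 1\leq j\leq n\}.
\]
With this choice both relations $\tau(u_j^*u_{n+1}x)=0$ and $\tau(u_{n+1}^*u_jx)=0$ hold for all $j\leq n$ and $x\in F$. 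Together with the induction hypothesis, this gives the required orthogonality for all pairs of distinct indices up to $n+1$.

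Continuing indefinitely produces the whole sequence. No step presents a genuine obstacle: the work is done by Lemma \ref{oneunitary}, and $\tau$ need not be tracial, since $\A$ is abelian. The only point requiring care is to include adjoints in $F_n$, so that orthogonality holds in both orders $(n,m)$ and $(m,n)$.
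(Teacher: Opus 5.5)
Your argument is correct and is essentially the paper's proof: it runs the same induction from $u_1=1$ and applies Lemma \ref{oneunitary} to the finite set $\{u_j^*x,\,u_j^*x^* : x\in F,\ 1\leq j\leq n\}$, which by commutativity is exactly the paper's $\bigcup_{i=1}^n(F\cup F^*)u_i^*$; the reversed relation then follows from $\tau(u_{n+1}^*u_jx)=\overline{\tau(u_{n+1}u_j^*x^*)}$. In a clean write-up, delete your first displayed choice of $F_n$ (the elements $u_jx^*$ are not the ones needed) and keep only the corrected set.
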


\begin{proof} 

We proceed by induction on $n$. For $n=1$, take $u_1=1$. Let $n\geq 1$ and suppose we have constructed $u_1, \ldots, u_n\in \mathscr U(\A)$ such that 
\begin{equation}\tau(u_i^*u_j x)= 0,\: \text{ for all } i\neq j\in \{1,\ldots,n\} \text{ and } x\in F. \end{equation} 
To construct $u_{n+1}$, we use Lemma \ref{oneunitary} for the finite set $K=\bigcup^n_{i=1}(F\cup F^*)u_i^*\subset \A$ to find a unitary $u_{n+1}$ with $\tau(u_{n+1} y)=0$ for all $y\in K$. Hence, for all $1\leq i\leq n$ and $x\in F$, we have 
\begin{equation*}
    \tau(u_i^*u_{n+1} x)=\tau(u_{n+1} xu_i^*)=0\:\text{ and }\:\tau( x^* u_i^* u_{n+1})= \tau(u_{n+1} x^* u_i^*)=0. 
\end{equation*}
Further, we get $\tau(u^*_{n+1} u_i x)=\overline{\tau(x^* u_i^* u_{n+1})}=0$ for all $1\leq i\leq n$ and $x\in F$, finishing the proof. \end{proof}

\begin{cor}\label{orthunitariesinM} Let $(\M,\tau)$ be a diffuse tracial von Neumann algebra, $\A \subset \M$ a diffuse abelian von Neumann subalgebra and $F \subset \M$ a finite set.
Then there exists a sequence of unitaries $u_n \in \mathcal A$ such that
\[\tau(u^*_nu_mx)=0,\:\text{ for all } x \in F \text{ and }n\neq m.\]    
\end{cor}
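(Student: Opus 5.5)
The plan is to reduce Corollary \ref{orthunitariesinM} to Theorem \ref{orthunitaries} by compressing everything onto $\A$ with the trace-preserving conditional expectation $\mathbb{E}_{\A}:\M\to\A$. The restriction of $\tau$ to $\A$ is a faithful normal state on the diffuse abelian von Neumann algebra $\A$, so Theorem \ref{orthunitaries} applies to $(\A,\tau|_{\A})$ with any finite subset of $\A$.

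Concretely, set $F'=\{\mathbb{E}_{\A}(x): x\in F\}$, which is a finite subset of $\A$. Theorem \ref{orthunitaries} applied to $F'$ gives unitaries $u_n\in\mathscr U(\A)$ with $\tau(u_n^*u_m\,\mathbb{E}_{\A}(x))=0$ for all $x\in F$ and $n\neq m$. Since $u_n^*u_m\in\A$, the bimodularity of $\mathbb{E}_{\A}$ and the identity $\tau\circ\mathbb{E}_{\A}=\tau$ give
\[
\tau(u_n^*u_m x)=\tau\bigl(\mathbb{E}_{\A}(u_n^*u_m x)\bigr)=\tau\bigl(u_n^*u_m\,\mathbb{E}_{\A}(x)\bigr)=0,
\]
which is exactly the required orthogonality relation.

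There is no real obstacle here. The only points to check are that $\mathbb{E}_{\A}$ exists, which holds because $\tau$ is a faithful normal trace on $\M$, and that $\tau|_{\A}$ is faithful and normal, so that the hypotheses of Theorem \ref{orthunitaries} are satisfied. The diffuseness of $\M$ itself is not used; diffuseness of $\A$ is enough.
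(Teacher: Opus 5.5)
Your proposal is correct and is essentially the paper's proof. You apply Theorem \ref{orthunitaries} to the finite set $\{\mathbb{E}_{\A}(x) : x\in F\}\subset\A$, and then use that $u_n^*u_m\in\A$ to get $\tau(u_n^*u_mx)=\tau(u_n^*u_m\,\mathbb{E}_{\A}(x))=0$. Your side remark is also right: only the diffuseness of $\A$ is used.
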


\begin{proof} Consider the finite set $K=\{\mathbb E_\A(x) \,:\, x\in F\}\subset \A$. Applying Theorem \ref{orthunitaries} to the algebra $\A$ and $K$, we find a sequence of unitaries $u_n\subset \A$ such that $\tau(u^*_nu_m \mathbb  E_{\A}(x))=0$, for all $x\in F$ and $n\neq m$. Since $u^*_nu_m\in\A$, we get $\tau(u^*_nu_mx)=\tau(u^*_nu_m \mathbb  E_{\A}(x))=0$, for all $x\in F$ and $n\neq m$.
\end{proof}

\subsection{Acylindrically hyperbolic groups}

A group $G$ is \textit{hyperbolic} if it is generated by a finite set $X$ and its Cayley graph $\mathrm{Cay}(G,X)$ is a hyperbolic metric space. This definition is independent of the choice of a particular generating set.

Let $H$ be a subgroup of a group $G$. We say that $X\subset G$ is a \textit{relative generating set} of $G$ (with respect to $H$) if $X\cup H$ generates $G$. To a relative generating set $X\subset G$, one associates the Cayley graph $\mathrm{Cay}(G,X\sqcup H)$, where the disjoint union means that for any element $a\in X\cap H$ and any vertex $g\in G$, there are two edges in $\mathrm{Cay}(G,X\sqcup H)$ going from $g$ to $ga$: one labeled by a copy of $a$ from $X$ and the other by a copy of $a$ from $H$. We denote by $\mathrm{Cay}(H,H)$ the Cayley graph of $H$ with respect to the generating set $H$ and naturally view it as a (complete) subgraph of $\mathrm{Cay}(G,X\sqcup H)$.

\begin{defn}\label{hyperbolicembedded} A subgroup $H$ \textit{hyperbolically embeds into a group $G$ with respect to a set $X\subset G$}, denoted by $H\hookrightarrow_h(G,X)$, if the following hold:

\begin{enumerate}
    \item[(i)] $X\cup H$ generates $G$ as a group,
    \item[(ii)] $\mathrm{Cay}(G,X\sqcup H)$ is hyperbolic, and
    \item[(iii)]  for any $n\in\N$, there are only finitely many elements $h\in H$ such that $h$ can be connected to $1$ in $\mathrm{Cay}(G,X\sqcup H)$ by a path of length at most $n$ avoiding edges of $\mathrm{Cay}(H,H)$.
\end{enumerate}

We say \textit{$H$ hyperbolically embeds into $G$} if there exists a set $X\subseteq G$ such that (i), (ii) and (iii) hold. The group $G$ is said to be \textit{acylindrically hyperbolic} if it contains a proper infinite hyperbolically embedded subgroup $H$ \cite{Osi16}.
\end{defn}

Examples of hyperbolically embedded subgroups include $G\hookrightarrow_h(G,\emptyset)$ and $H\hookrightarrow_h(G,G)$ for any group $G$ and any finite subgroup $H\leqslant G$. Further, we have $H\hookrightarrow_hH\ast\Z$ but $H\not\hookrightarrow_hH\times\Z$.

The following theorem is a direct consequence of the $n$-gon inequality \cite[Proposition 4.14]{DGO} (see also \cite[Proposition 3.2]{Osi07}). It was used in \cite{CDS} to verify the ISR conjecture for all ICC acylindrically hyperbolic groups. Their proof relied on robust group-theoretic properties of the hyperbolically embedded subgroup $H\leqslant G$. 
We will use this theorem in the norm computation carried out in the main theorem (see Theorem \ref{normconv0}). Moreover, we use this result to establish an amalgamated free product decomposition inside the ultrapower $\cL(G)^{\omega}$ in Section \ref{sec:asymptoticfreefactors}.

\begin{thm}[{\cite[Theorem 2.8]{CDS}}]\label{evenlength} Let $G$ be a group with a hyperbolically embedded subgroup $H$. Then, for every finite set $K\subset G\setminus H$, there exists a finite set $L\subset H$ such that for all $m,n\geq 1$,
\begin{equation*}
    ((H\setminus L)K)^{m}\cap (K(H\setminus L))^{n}=\emptyset.
\end{equation*}
Here, $(S_1S_2)^{m}=S_1S_2S_1S_2\cdots S_1S_2$ where there are $m$ copies of $S_1S_2$ on the right-hand side.
\end{thm}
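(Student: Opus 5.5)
We derive the statement from the $n$-gon inequality \cite[Proposition 4.14]{DGO}, arguing by contradiction. Fix a relative generating set $X$ with $H\hookrightarrow_h(G,X)$, let $d_X$ be the word metric on $\mathrm{Cay}(G,X\sqcup H)$, and let $\widehat{d}$ be the relative metric on $H$: $\widehat{d}(h,h')$ is the length of a shortest path from $h$ to $h'$ avoiding edges of $\mathrm{Cay}(H,H)$. Condition (iii) of Definition \ref{hyperbolicembedded} says exactly that $\widehat{d}$-balls in $H$ are finite. Given the finite set $K\subset G\setminus H$, put $R=\max_{k\in K}|k|_X$; since $K\cap H=\emptyset$, each $k\in K$ is represented by a path of $d_X$-length at most $R$ that is not a single $H$-edge. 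The $n$-gon inequality provides a constant $D=D(\delta)$, with $\delta$ the hyperbolicity constant, such that in any geodesic polygon with $N$ sides in which a set $I$ of sides are isolated $H$-components, $\sum_{i\in I}\widehat{d}(\text{endpoints of } p_i)\le DN$. I will choose
\[
L=\{h\in H:\widehat{d}(1,h)\le C\},
\]
which is finite by (iii), for a constant $C$ depending only on $D$ and $R$, to be fixed below.

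Now suppose $g=h_1k_1h_2k_2\cdots h_mk_m=k'_1h'_1\cdots k'_nh'_n$ with $h_i,h'_j\in H\setminus L$ and $k_i,k'_j\in K$. Reading both words gives a closed loop in $\mathrm{Cay}(G,X\sqcup H)$. It consists of $2(m+n)$ single $H$-edges, namely the $h_i$ and $h'_j$, separated by short $X$-geodesic segments of length at most $R$ representing the $k$'s. The key observation is that the two words are \emph{misaligned in parity}: the left word begins with an $H$-letter, the right word begins with a $K$-letter, and likewise at the ends. Consequently no $H$-edge in the loop can be ``matched'' with its neighbour.

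The hard part is isolation. I must show that after the standard reduction (merging connected $H$-components) some $H$-edge remains an isolated component of the polygon, with at least a proportional number of such edges, all of large $\widehat{d}$-length. Two consecutive $h_i,h_{i+1}$ cannot be connected, since otherwise $k_i\in H$ via a short path outside $\mathrm{Cay}(H,H)$. That would force $\widehat{d}$-control in $H$ of an element of $K$, which is impossible because $K\cap H=\emptyset$. One has to rule out this degenerate case carefully, and more generally rule out nonadjacent $H$-components connecting across the loop.

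I will handle this by choosing a minimal counterexample, with $m+n$ minimal. If some $h_i$ connects to some other $H$-component, say $h_i$ to $h'_j$, then cutting the loop along the connecting $H$-edge produces two shorter loops. The parity misalignment persists in at least one of them, giving a smaller counterexample. I will also use the triangle inequality for $\widehat{d}$ to absorb the boundedly many $\widehat{d}$-short jumps caused by the $k$'s; here $C$ is taken larger than a constant depending on $R$ and $D$. In the end, all $h_i$ and $h'_j$ are isolated components. The $n$-gon inequality, applied to the polygon with $N\le 2(m+n)+2(m+n)R$ sides, then gives
\[
2(m+n)\,C<\sum\widehat{d}\le D(2+2R)(m+n).
\]
This is a contradiction once $C>D(1+R)$, so the two products cannot coincide and the sets in the statement are disjoint.
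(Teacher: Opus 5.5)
Your overall strategy (take $L$ to be a $\widehat d$-ball and contradict the $n$-gon inequality of \cite{DGO}) is the intended one. The paper gives no proof of its own; it only cites \cite[Theorem 2.8]{CDS} as a direct consequence of that inequality. However, the isolation step, which you rightly flag as the hard part, has a genuine gap.

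Write the coincidence as a cyclically alternating relation $h_1k_1\cdots h_Nk_N=1$ with $N=m+n$. There are $m+n$ $H$-letters here, not $2(m+n)$, but this only changes constants. Suppose the edge $e_i$ of $h_i$ is connected to $e_j$, and you cut along a connecting $H$-edge $t$ with label $h$. The two shorter relations then contain \emph{new} $H$-letters, namely $h$ and (up to inverses) $h_ihh_j$. Nothing forces these outside $L$, so neither piece need be a counterexample, and minimality of $m+n$ gives no contradiction. Parity is not the obstruction, since both pieces are again alternating; the obstruction is the possible shortness of the new letter. Hence the claim that ``in the end all $h_i,h'_j$ are isolated'' is unjustified.

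\textbf{First fix: make the $k$'s single edges.} Enlarge $X$ by the finite set $K\cup K^{-1}$. This preserves $H\hookrightarrow_h(G,X)$ \cite{DGO}. Each $k_a$ is then a single $X$-edge $f_a$, and the $H$-components of the loop are exactly the $e_a$. With your geodesic representatives this fails in two ways. $H$-edges inside them merge with the adjacent $e_a$, so $e_a$ is no longer an $H$-component of the polygon as the inequality requires. They can also be connected to other $e_b$'s. Your triangle-inequality absorption does not repair this, because $\widehat d$ may be infinite on those short $H$-letters. If you kept the geodesics, you would instead have to enlarge $L$ by finitely many two-sided translates of the ball.

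\textbf{Second fix: do not recurse.} If no two $e_a$ are connected, apply the inequality to the whole $2N$-gon. Otherwise, choose a connected pair $e_i\sim e_{i+s+1}$ (cyclic indices) that minimizes the number $s$ of intermediate $H$-edges, over all connected pairs and both arcs between them. Then $s\ge 1$, because $e_i\sim e_{i+1}$ would force $k_i\in H$.

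Consider the polygon $f_ie_{i+1}f_{i+1}\cdots e_{i+s}f_{i+s}t$, closed up by one $H$-edge $t$. Every inner $e_a$ is isolated in it. A connection to another inner edge would contradict minimality of $s$. A connection to $t$ would too, since the endpoints of $t$ lie in the coset of $e_i$. The new edge $t$ is simply not counted. Hence $\sum_{a=i+1}^{i+s}\widehat d(1,h_a)\le D(2s+1)$. This is impossible if you take $L=\{h\in H:\widehat d(1,h)\le 3D\}$, which is finite by condition (iii).
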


We note that the previous result can be rephrased in the following way, which will be more convenient to work with in order to derive the main result in the next section.

\begin{cor}\label{nontrivial} Let $G$ be a group with a hyperbolically embedded subgroup $H$. Then for every finite set $K\subset G\setminus H$, there exists a finite set $L\subset H$ such that the following holds: for every $\ell\geq 1$ and every $g_1,\dots,g_\ell\in K, h_1,\dots,h_\ell\in H\setminus L$, we have
$g_1h_1\cdots g_\ell h_\ell\not=1$.
\end{cor}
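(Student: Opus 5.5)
The plan is to deduce the corollary directly from Theorem \ref{evenlength}, applied to the same finite set $K\subset G\setminus H$. That theorem produces a finite $L\subset H$ with $((H\setminus L)K)^{m}\cap (K(H\setminus L))^{n}=\emptyset$ for all $m,n\geq 1$. We keep this $L$, possibly after replacing it by $L\cup L^{-1}$ (still finite), so that we may invert elements of $H\setminus L$ freely.

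We argue by contradiction. Suppose $g_1h_1\cdots g_\ell h_\ell=1$ with $g_j\in K$ and $h_j\in H\setminus L$, and take $\ell$ minimal.

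\emph{Case $\ell=1$.} Here $g_1h_1=1$, so $g_1=h_1^{-1}\in H$, which contradicts $K\cap H=\emptyset$.

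\emph{Case $\ell\geq 2$.} We produce an element lying in both sets of Theorem \ref{evenlength}. Cyclically rotating the relation gives
\[
h_\ell g_1h_1g_2\cdots h_{\ell-1}g_\ell=1,
\]
which is a relation in $((H\setminus L)K)^{\ell}$. We then split the word into two pieces. Write $x=h_\ell g_1 h_1 g_2\cdots h_{k-1}g_k\in ((H\setminus L)K)^{k}$ for some $1\le k<\ell$. The relation says $x$ equals the inverse of the remaining piece $h_k g_{k+1}\cdots h_{\ell-1}g_\ell$, namely
\[
g_\ell^{-1}h_{\ell-1}^{-1}\cdots g_{k+1}^{-1}h_k^{-1},
\]
which has the form $K^{-1}(H\setminus L)\cdots K^{-1}(H\setminus L)$.

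This is where the main obstacle lies. The inverted piece involves $K^{-1}$ rather than $K$, so it does not match the shape $(K(H\setminus L))^{n}$ required by Theorem \ref{evenlength}. The fix is to apply Theorem \ref{evenlength} from the start to the symmetrized finite set $K\cup K^{-1}\subset G\setminus H$ rather than to $K$, and to symmetrize $L$ as well. With that choice the inverted piece lies in $((K\cup K^{-1})(H\setminus L))^{\ell-k}$, while $x\in((H\setminus L)(K\cup K^{-1}))^{k}$. This contradicts the emptiness of the intersection, with $m=k\geq 1$ and $n=\ell-k\geq 1$. Since $K\subset K\cup K^{-1}$, the $L$ obtained this way also works for $K$.

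Note that minimality of $\ell$ is not actually needed; any $\ell\geq 2$ admits such a split, taking $k=1$ for instance. So the whole argument is a bookkeeping check that the alternating pattern and the symmetric choices of $K$ and $L$ line up.
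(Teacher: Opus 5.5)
Your proof is correct and is essentially the paper's argument: symmetrize $K$ and $L$, which is legitimate because the emptiness condition survives enlarging $L$ (since $H\setminus L$ only shrinks), then split the relation into two nonempty alternating pieces and invert one of them to land in the forbidden intersection. The cyclic rotation is harmless but unnecessary. Splitting $g_1h_1\cdots g_\ell h_\ell$ directly as $(g_1h_1\cdots g_kh_k)(g_{k+1}h_{k+1}\cdots g_\ell h_\ell)$ and inverting the first factor already yields an element of $((H\setminus L)K)^k\cap (K(H\setminus L))^{\ell-k}$, which is how the paper phrases it via $1\notin (K(H\setminus L))^{m+n}$.
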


\begin{proof}
After replacing $K$ with $K\cup K^{-1}$, we may assume that $K$ is symmetric. By Theorem \ref{evenlength} we can find a finite set $L\subset H$ such that $((H\setminus L)K)^{m}\cap (K(H\setminus L))^{n}=\emptyset$, for every $m,n\geq 1$. Since this condition holds when we replace $L$ by any larger subset of $H$, after replacing $L$ with $L\cup L^{-1}$, we may also assume that $L$ is symmetric. Since $K$ and $L$ are symmetric, the last condition is equivalent to $1\notin  (K(H\setminus L))^{m+n}$, for every $m,n\geq 1$. Thus, the conclusion holds for every $\ell\geq 2$. The conclusion also clearly holds for $\ell=1$ since $K\subset G\setminus H$ and hence $1\notin K(H\setminus L)$.
\end{proof}

\section{Asymptotically free factors}\label{sec:asymptoticfreefactors}

In this section, of independent interest, we introduce the notion of asymptotically free factors and prove
 that $\cL(H)$ is an asymptotically free factor of $\cL(G)$, for a hyperbolically embedded subgroup $H<G$, thereby establishing Theorem \ref{amalgfreeprodinLGw}.

Let $G$ be a group. A subgroup $H\leq G$ is said to be a {\it free factor} of $G$ if $G=H*K$, for a subgroup $K\leq G$.
Let $(\mathcal M,\tau)$ be a tracial von Neumann algebra. A von Neumann subalgebra $\mathcal N\subset \mathcal M$ is said to be a {\it free factor} of $\mathcal M$ if $\mathcal M=\mathcal N*\mathcal P$, for a von Neumann subalgebra $\mathcal P\subset \mathcal M$. Here we introduce the following asymptotic version of these notions:

\begin{defn} Let $\omega$ be a nonprincipal ultrafilter on $\mathbb N$. 

\begin{enumerate}
  \item  Let $G$ be a countable group.  A subgroup $H\leq G$ is said to be an {\it asymptotically free factor} of $G$ if $G$ and $H^\omega$ are free with amalgamation over $H$ inside $G^\omega$. In other words, we have a canonical isomorphism $G\vee H^\omega\cong G*_H{H^\omega}$.

\item Let $(\mathcal M,\tau)$ be a separable tracial von Neumann algebra. A von Neumann subalgebra $\mathcal N\subset \mathcal M$ is said to be an {\it asymptotically free factor} of $\mathcal M$ if $\mathcal M$ and $\mathcal N^\omega$ are free with amalgamation over $\mathcal N$ inside $\mathcal M^\omega$. In other words, we have a canonical isomorphism $\mathcal M\vee \mathcal N^\omega\cong \mathcal M*_\mathcal N\mathcal N^\omega$.
\end{enumerate}    
\end{defn}

We next observe that these notions generalize the notions of free factors.

\begin{prop} The following hold:
\begin{enumerate}
    \item Let $G$ be a countable group. If a subgroup $H\leq G$ is a free factor of $G$, then it is an asymptotically free factor of $G$.

    \item Let $(\mathcal M,\tau)$ be a separable tracial von Neumann algebra. If a von Neumann subalgebra $\mathcal N\subset \mathcal M$ is a free factor of $\mathcal M$, then it is an asymptotically free factor of $\mathcal M$.
\end{enumerate}

\end{prop}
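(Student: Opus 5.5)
Part (1) follows from part (2) applied to $\mathcal M=\cL(G)$, $\mathcal N=\cL(H)$, since $G=H*K$ gives $\cL(G)=\cL(H)*\cL(K)$. It is also easy to check directly, in the same way as the argument below. So the plan is to prove (2) and to note that the group version goes through verbatim.

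Write $\mathcal M=\mathcal N*\mathcal P$ and $B=\mathcal N$. By definition we must show that $\mathcal M$ and $\mathcal N^\omega$ are free with amalgamation over $\mathcal N$ inside $\mathcal M^\omega$. First note that $\mathcal M\cap\mathcal N^\omega=\mathcal N$: if $x\in\mathcal M$ is a limit of $(y_n)\subset\mathcal N$ in $2$-norm along $\omega$, then $x=\mathbb E_{\mathcal N}(x)$. Also, $\mathbb E_{\mathcal N}$ on $\mathcal M^\omega$ restricts to the expectations onto $\mathcal N$ on both $\mathcal M$ and $\mathcal N^\omega$. By Lemma \ref{amalgam}, it suffices to show
\[\tau^\omega(x_1y_1\cdots x_my_m)=0\]
for $x_j\in\mathcal M\ominus\mathcal N$ and $y_j=(y_j^{(n)})_n\in\mathcal N^\omega\ominus\mathcal N$. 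We may take $\mathbb E_{\mathcal N}(y_j^{(n)})$ to mean the coordinatewise expectation onto $\mathcal N$, and we may choose representatives with $\|y_j^{(n)}\|$ uniformly bounded.

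The key step is to reduce to reduced words. By Kaplansky density and $2$-norm continuity (the $y$'s being bounded), we may assume each $x_j$ is a finite sum of reduced words in $\mathcal N\ominus\mathbb C$ and $\mathcal P\ominus\mathbb C$ that are not in $\mathcal N$. Such words begin or end with a letter from $\mathcal P\ominus\mathbb C$. Absorbing any $\mathcal N$-letters at the ends into the neighbouring $y$'s, each $x_j$ becomes a sum of terms $a\,p\,b$ with $a,b\in\mathcal N$ and $p$ a reduced word beginning and ending in $\mathcal P\ominus\mathbb C$. The new $\mathcal N$-elements $b\,y_j^{(n)}a'$ are no longer orthogonal to $\mathbb C$. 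So the target is instead
\[\lim_{n\to\omega}\tau\bigl(p_1 z_1^{(n)}p_2\cdots p_m z_m^{(n)}\bigr),\qquad z_j^{(n)}=b_j y_j^{(n)}a_{j+1}.\]
Split $z_j^{(n)}=\tau(z_j^{(n)})+(z_j^{(n)}-\tau(z_j^{(n)}))$. In the free product, every term containing at least one centered $z$ is a product with alternating centered letters, up to scalars. After collapsing, the scalar parts give shorter words in which consecutive $\mathcal P$-words merge; these need not stay reduced.

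The main obstacle is controlling this expansion. A cleaner route is to use the defining freeness of $\mathcal N$ and $\mathcal P$ with moment formulas. The trace of an alternating product is a polynomial in $\tau$-values of subproducts of $\mathcal N$-letters and of $\mathcal P$-letters. Each such $\mathcal N$-moment involves the $z_j^{(n)}$ together with fixed elements $a,b$ and $\mathcal N$-letters of the $p$'s. Then everything reduces to showing that $\tau(c_0y_{j_1}^{(n)}c_1y_{j_2}^{(n)}\cdots)$ behaves as if $\{y^{(n)}\}$ and the fixed algebra $\mathcal N$ were "free over $\mathcal N$", which is false in general.

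So I would instead argue more directly with a canonical model. The $\mathcal N$-bimodule $L^2(\mathcal M)\ominus L^2(\mathcal N)$ decomposes as a direct sum of $L^2$ of reduced words, each starting and ending with a $\mathcal P$-letter and flanked by $\mathcal N$. A vector $x_1y_1x_2\cdots$ then lives in the span of words where each $y_j$ sits between two $\mathcal P$-ends. Since $\mathbb E_{\mathcal N}(y_j)=0$ coordinatewise, no cancellation happens at the $y_j$-junctions: $\mathcal P\ominus\mathbb C$, then $\mathcal N\ominus\mathbb C$ (after splitting off scalar parts handled by induction), then $\mathcal P\ominus\mathbb C$. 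This shows that each product stays in $L^2(\mathcal M)\ominus L^2(\mathcal N)$ coordinatewise, giving trace $0$ in the limit. I expect verifying this junction-by-junction orthogonality, with the scalar parts treated by induction on $m$, to be the technical heart.
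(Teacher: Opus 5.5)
\textbf{There is a genuine gap: the proof is never completed, and the argument you settle on rests on a false claim.}

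Several parts are fine. Deducing (1) from (2) is correct; it is the argument of (2)$\Rightarrow$(3) in Theorem \ref{equivalences}, using that $h\in H^\omega\setminus H$ gives $u_h\perp \cL(H)$. Your reduction to $\lim_{n\to\omega}\tau(p_1z_1^{(n)}\cdots p_mz_m^{(n)})$ with $z_j^{(n)}=b_jy_j^{(n)}a_{j+1}$ is also correct.

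The problem is in the final ``canonical model'' step. The phrase ``$\mathbb E_{\mathcal N}(y_j)=0$ coordinatewise'' has no content, since $y_j^{(n)}\in\mathcal N$ forces $\mathbb E_{\mathcal N}(y_j^{(n)})=y_j^{(n)}$. The conclusion drawn from it, that each coordinate product lies in $L^2(\mathcal M)\ominus L^2(\mathcal N)$, is false. Take $\mathcal N=\cL(\mathbb Z)$ generated by a Haar unitary $u$, $0\neq p\in\mathcal P\ominus\mathbb C$, $x_1=u^{-1}pu^{-1}$, $x_2=p^*$ and $y_1=y_2=(u^n)_n\in\mathcal N^\omega\ominus\mathcal N$. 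The $n=1$ coordinate of $x_1y_1x_2y_2$ is $u^{-1}pp^*u$, whose trace is $\|p\|_2^2\neq 0$. Orthogonality holds only in the limit.

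The proposed ``induction on $m$'' for the scalar parts also does not close. Merging $p_jp_{j+1}$ produces $\mathcal N^\omega$-letters such as $z_{j-1}^{(n)}c\,z_{j+1}^{(n)}$, which are not asymptotically orthogonal to $\mathcal N$. Think of $yy^*$.

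\textbf{The missing idea} is that $y_j$ is orthogonal to all of $\mathcal N$ inside $\mathcal N^\omega$, not merely to $\mathbb C$. For every fixed $c\in\mathcal N$ one has $\lim_{n\to\omega}\tau(y_j^{(n)}c)=\tau(\mathbb E_{\mathcal N}(y_j)c)=0$. In particular
\[\lim_{n\to\omega}\tau(z_j^{(n)})=\lim_{n\to\omega}\tau(y_j^{(n)}a_{j+1}b_j)=0.\]
So at the exact point where you split $z_j^{(n)}=\tau(z_j^{(n)})+w_j^{(n)}$, you are already done. Every term containing a scalar part has a coefficient tending to $0$ along $\omega$, and its other factors are uniformly bounded in operator norm, so it is negligible. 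The remaining term $\tau(p_1w_1^{(n)}\cdots p_mw_m^{(n)})$ is the trace of an alternating product of centered elements of $\mathcal P$ and $\mathcal N$, so it vanishes for every $n$ by freeness. No moment formulas, expansion control or induction are needed. The same correction applies to your ``verbatim'' group argument: the junction elements $c\,h_{i,n}c'$ are nontrivial only for $\omega$-almost every $n$.

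\textbf{Comparison with the paper.} The paper avoids the $\mathcal N$-flanks altogether. It writes $\mathcal M\vee\mathcal N^\omega=\mathcal P\vee\mathcal N^\omega$. It then observes that $\mathcal P$ and $\mathcal N^\omega$ are free; here a coordinatewise computation is legitimate, because elements of $\mathcal P$ carry no $\mathcal N$-letters and one may take $\tau(y^{(n)})=0$. Finally it applies the standard identity $\mathcal P*\mathcal N^\omega=(\mathcal P*\mathcal N)*_{\mathcal N}\mathcal N^\omega=\mathcal M*_{\mathcal N}\mathcal N^\omega$. Your direct word computation, once repaired as above, amounts to a self-contained proof of that identity in this situation.
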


\begin{proof}
 (1) Assume that $H$ is a free factor of $G$, and let $K\leq G$ be a subgroup such that $G=H*K$. Then $G\vee H^\omega=K\vee H^\omega=K*H^\omega=(K*H)*_HH^\omega=G*_HH^\omega$.
 
 (2) Assume that $\mathcal N\subset \mathcal M$ is a free factor of $\mathcal M$, and let $\mathcal P\subset \mathcal N$ be a von Neumann subalgebra such that $\mathcal M=\mathcal N*\mathcal P$. Then $\mathcal M\vee \mathcal N^\omega=\mathcal P\vee\mathcal N^\omega=\mathcal P*\mathcal N^\omega=(\mathcal P*\mathcal N)*_\mathcal N\mathcal N^\omega=\mathcal M*_\mathcal N\mathcal N^\omega$. 
\end{proof}

The following result shows that the notions of asymptotically free factors for groups and von Neumann algebras are compatible.

\begin{thm}\label{equivalences}
    If $H$ is a subgroup of a countable group $G$, then the following  are equivalent:

    \begin{enumerate}
        \item For every $k\geq 1$ and $g_1,\dots,g_k\in G\setminus H$, we can find a finite set $L\subset H$ such that 
    for every  $h_1,\dots,h_k\in H\setminus L$, we have
$g_1h_1\cdots g_kh_k\not=1$.

        \item $\cL(H)$ is an asymptotically free factor of $\cL(G)$.
        
        \item $H$ is an asymptotically free factor of $G$.
    \end{enumerate}
\end{thm}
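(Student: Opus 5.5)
We prove the cycle of implications (1)$\Leftrightarrow$(3) and (1)$\Leftrightarrow$(2), using Lemma \ref{amalgam} to reduce freeness with amalgamation to the vanishing of alternating moments. Two preliminary observations do most of the bookkeeping.

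\emph{Identifying $H^\omega\setminus H$ and $\cL(H)^\omega\ominus\cL(H)$.} An element $x=(h_n)_n\in H^\omega$ lies outside $H$ exactly when, for each fixed $h\in H$, we have $h_n\neq h$ for $\omega$-almost every $n$. Since $L$ is finite, this is equivalent to $h_n\notin L$ $\omega$-a.e.\ for every finite $L\subset H$. On the operator algebraic side, $\mathbb E_{\cL(H)^\omega}$ restricted to $\cL(G)$ is $\mathbb E_{\cL(H)}$, because it acts coordinatewise. Hence $\cL(G)\cap\cL(H)^\omega=\cL(H)$, so Lemma \ref{amalgam} applies with $\mathcal M_1=\cL(G)$, $\mathcal M_2=\cL(H)^\omega$ and $B=\cL(H)$. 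Moreover, $y=(y_n)_n\in\cL(H)^\omega$ satisfies $\mathbb E_{\cL(H)}(y)=0$ iff every Fourier coefficient $\tau(y_nu_h^*)$ tends to $0$ along $\omega$.

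\emph{(1)$\Leftrightarrow$(3).} By the group analogue of Lemma \ref{amalgam} (cyclic permutation and merging of adjacent letters from the same factor), freeness of $G$ and $H^\omega$ over $H$ amounts to $g_1x_1\cdots g_kx_k\neq1$ for all $g_i\in G\setminus H$ and $x_i\in H^\omega\setminus H$.
\begin{itemize}
\item If (1) holds, represent $x_i=(h^i_n)_n$ and take $L$ from (1). By the first observation, all $h^i_n\notin L$ for $\omega$-a.e.\ $n$, so $g_1h^1_n\cdots g_kh^k_n\neq1$ $\omega$-a.e., i.e.\ the word is nontrivial in $G^\omega$.
\item If (1) fails for some $g_1,\dots,g_k$, enumerate an exhaustion $L_1\subset L_2\subset\cdots$ of $H$ by finite sets (possible since $G$ is countable). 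For each $n$ choose $h^i_n\in H\setminus L_n$ with $g_1h^1_n\cdots g_kh^k_n=1$. Then $x_i=(h^i_n)_n\notin H$ and $g_1x_1\cdots g_kx_k=1$, so (3) fails.
\end{itemize}

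\emph{(2)$\Rightarrow$(1).} Suppose (1) fails and build $h^i_n$ as in the previous paragraph. Set $x_i=(u_{h^i_n})_n$; these lie in $\cL(H)^\omega\ominus\cL(H)$ by the second observation, and $\mathbb E_{\cL(H)}(u_{g_i})=0$. But
\[\tau^\omega(u_{g_1}x_1\cdots u_{g_k}x_k)=1,\]
which contradicts freeness.

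\emph{(1)$\Rightarrow$(2).} This is the main analytic step and the expected obstacle. Take $x_j\in\cL(G)\ominus\cL(H)$ and $y_j=(y_{j,n})_n\in\cL(H)^\omega\ominus\cL(H)$. We need $\tau^\omega(x_1y_1\cdots x_my_m)=0$.
\begin{itemize}
\item \emph{Reduce to finitely supported $x_j$.} The moment is multilinear and $\|\cdot\|_2$-continuous in each variable when the others are bounded in operator norm. By Kaplansky density, followed by applying the contraction $1-\mathbb E_{\cL(H)}$, we may assume each $x_j$ is a finite combination of $u_g$ with $g$ in a finite set $K\subset G\setminus H$. Expanding, it suffices to treat $x_j=u_{g_j}$ with $g_j\in K$.
\item \emph{Choose $L$.} There are finitely many tuples $(g_1,\dots,g_m)\in K^m$. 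Let $L$ be the union of the finite sets that (1) provides for these tuples.
\item \emph{Split each $y_{j,n}$.} Write $y_{j,n}=P_L(y_{j,n})+y'_{j,n}$, where $P_L$ is the Fourier projection onto $\text{span}\{u_h:h\in L\}$. Then $P_L(y_{j,n})$ lives in a finite-dimensional space and its coefficients tend to $0$ along $\omega$, so it tends to $0$ in operator norm. Also $\|y'_{j,n}\|\le\|y_{j,n}\|+\|P_L(y_{j,n})\|$ stays bounded. Hence every term of the expansion containing some $P_L(y_{j,n})$ vanishes in the limit.
\item \emph{Kill the remaining term.} It is $\tau(u_{g_1}y'_1\cdots u_{g_m}y'_m)$, with each $y'_j$ having $\ell^2$-support in $H\setminus L$. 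Use the support rule $\mathrm{supp}(ab)\subset\mathrm{supp}(a)\,\mathrm{supp}(b)$ for $a,b\in\cL(G)$, proved by approximation. It shows that $u_{g_1}y'_1\cdots u_{g_m}$ has $\ell^2$-support in $g_1(H\setminus L)\cdots(H\setminus L)g_m$. By (1), this set is disjoint from $\mathrm{supp}((y'_m)^*)\subset (H\setminus L)^{-1}$, so the trace is $0$.
\end{itemize}
Lemma \ref{amalgam} then gives the amalgamated freeness. The delicate points are this support/disjointness argument and the uniform boundedness needed to pass to the limit.
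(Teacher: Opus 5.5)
Your proof is correct and follows essentially the paper's argument: the key implication (1)$\Rightarrow$(2) goes exactly as in the paper (Lemma \ref{amalgam}, reduction to $x_i=u_{g_i}$, removal of the $L$-Fourier part of $y_{i,n}$, which vanishes in operator norm, and a support argument), and your direct (2)$\Rightarrow$(1) via $x_i=(u_{h^i_n})_n$ is the paper's (2)$\Rightarrow$(3)$\Rightarrow$(1) composed into one step. The differences are cosmetic: you arrange the equivalences as (1)$\Leftrightarrow$(3) and (1)$\Leftrightarrow$(2) instead of a cycle, you use the general rule $\mathrm{supp}(ab)\subset\mathrm{supp}(a)\,\mathrm{supp}(b)$ in place of the paper's Kaplansky reduction to finitely supported $y_{i,n}$, and you explicitly check $\cL(G)\cap\cL(H)^\omega=\cL(H)$, which the paper leaves implicit.
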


\begin{proof}
    (1) $\Rightarrow$ (2) Assume that (1) holds.
    By Lemma \ref{amalgam}, to prove (2), it suffices to show that if $k\geq 1$ and $x_1,\dots,x_k\in \cL(G)\ominus\cL(H)$, $y_1,\ldots,y_k\in\cL(H)^{\omega}\ominus\cL(H)$, then $\tau^\omega(x_1y_1\cdots x_ky_k)=0$. 
By using $\|\cdot\|_2$-approximations, we may assume that 
$x_i=u_{g_i}$, for some $g_i\in G\setminus H$, for every $1\leq i\leq k$. By (1), we can find  $L\subset H$ finite such that  $1\not\in g_1(H\setminus L)\cdots g_k(H\setminus L)$.

Denote by $e:\cL(H)\rightarrow\cL(H)$ the restriction to $\cL(H)$ of the orthogonal projection onto $\ell^2(L)$, i.e., $e(x)=\sum_{h\in L}\tau(xu_h^*)u_h$. Note that $\|e(x)\|\leq |L|\;\|x\|$, for every $x\in\cL(H)$.
For $1\leq i\leq k$, represent $y_i=(y_{i,n})_n$, where $\sup_n\|y_{i,n}\|<\infty$. 
Moreover, by Kaplansky's density theorem, we may assume that $y_{i,n}$ is finitely supported, for every $n\geq 1$.
Since $\lim_{n\rightarrow\omega}\tau(y_{i,n}u_h^*)=\tau^\omega(y_iu_h^*)=0$, for every $h\in H$, we get
$\lim_{n\rightarrow\omega}\|e(y_{i,n})\|=0$. Hence, $y_i=(z_{i,n})$ in $\cL(G)^\omega$, where $z_{i,n}=y_{i,n}-e(y_{i,n})$.  

 Since $z_{i,n}$ has finite support contained in $H\setminus L$, for every $1\leq i\leq k$, $u_{g_1}z_{1,n}\cdots u_{g_k}z_{k,n}$ is supported on $g_1(H\setminus L)\cdots g_k(H\setminus L)$.  Since 
$1\notin g_1(H\setminus L)\cdots g_k(H\setminus L)$, we conclude that $\tau(u_{g_1}z_{1,n}\cdots u_{g_k}z_{k,n})=0$, for every $n\geq 1$. This implies that 
 $\tau^\omega(x_1y_1\cdots x_ky_k)=\lim_{n\rightarrow\omega}\tau(u_{g_1}z_{1,n}\cdots u_{g_k}z_{k,n})=0.$

 (2) $\Rightarrow$ (3) This implication follows readily by using the natural inclusion $G^\omega\subset\mathcal U(\cL(G)^\omega)$.

 (3) $\Rightarrow$ (1) Assume that (3) holds, and suppose by contradiction that (1) is false. Enumerate $H=\{h_n\}_{n\geq 1}$ and define $L_n=\{h_1,\dots,h_n\}$, for every $n\geq 1$. Since (1) is false, we can find $g_1,\dots,g_k\in G\setminus H$, such that for every $n\geq 1$, there exist $h_{1,n},\cdots,h_{k,n}\in H\setminus L_n$ satisfying $g_1h_{1,n}\cdots g_kh_{k,n}=1$.
Therefore, putting $h_i=(h_{i,n})_n\in H^\omega$ we have $h_i\in H^\omega\setminus H$, for every $1\leq i\leq k$.
Since $g_1h_1\cdots g_kh_k=1$ and $g_1,\dots,g_k\in G\setminus H$, this contradicts the fact that $G$ and $H^\omega$ are free with amalgamation over $H$ inside $G^\omega$. 
\end{proof}

By combining Theorem \ref{equivalences} with Theorem \ref{evenlength}, we establish Theorem \ref{amalgfreeprodinLGw}.

\begin{thm} Let $G$ be a countable group with a hyperbolically embedded subgroup $H$. Then $\cL(H)$ is an asymptotically free factor of $\cL(G)$. 
\end{thm}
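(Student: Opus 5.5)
By Theorem \ref{equivalences}, it is enough to verify condition (1) there for the pair $H\leqslant G$. In other words, given $k\geq 1$ and $g_1,\dots,g_k\in G\setminus H$, we must find a finite set $L\subset H$ such that $g_1h_1\cdots g_kh_k\neq 1$ for all $h_1,\dots,h_k\in H\setminus L$. The implication (1) $\Rightarrow$ (2) of Theorem \ref{equivalences} then gives that $\cL(H)$ is an asymptotically free factor of $\cL(G)$. This is exactly the statement that $\cL(G)$ and $\cL(H)^\omega$ are free with amalgamation over $\cL(H)$, i.e.\ Theorem \ref{amalgfreeprodinLGw}.

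To verify (1), take the finite set $K=\{g_1,\dots,g_k\}\subset G\setminus H$ and apply Corollary \ref{nontrivial}, which is derived from Theorem \ref{evenlength}. It yields a finite set $L\subset H$ such that for every $\ell\geq 1$, every $g'_1,\dots,g'_\ell\in K$ and every $h_1,\dots,h_\ell\in H\setminus L$, we have $g'_1h_1\cdots g'_\ell h_\ell\neq 1$. Specialize to $\ell=k$ and $g'_i=g_i$. Then $g_1h_1\cdots g_kh_k\neq 1$ whenever all $h_i\in H\setminus L$, which is precisely (1).

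The only point requiring attention is that condition (1) in Theorem \ref{equivalences} allows $L$ to depend on the tuple $(g_1,\dots,g_k)$. Corollary \ref{nontrivial} gives something stronger: a single $L$ that works uniformly for all words of every length with letters from $K$. So no obstacle remains; the substantive content lies in the already established $n$-gon estimate behind Theorem \ref{evenlength}.

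Two standing hypotheses of Theorem \ref{equivalences} should also be checked. Countability of $G$ is assumed in the statement. The ultraproduct framework of part (2) of the definition requires $\cL(G)$ to be separable, and this follows from the countability of $G$.
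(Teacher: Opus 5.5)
Your proof is correct and is essentially the paper's argument. The paper also deduces the theorem by checking condition (1) of Theorem \ref{equivalences} via Theorem \ref{evenlength}, in the form of Corollary \ref{nontrivial} applied to $K=\{g_1,\dots,g_k\}$, and then invoking the implication (1) $\Rightarrow$ (2).
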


\begin{remark}

 Theorem \ref{equivalences} gives that  $G\vee H^{\omega}=G\ast_HH^{\omega}$, for $H$ hyperbolically embedded in $G$. However, the analogous decomposition does not hold  when considering the ultraproduct $C^*$-algebra $C_r^*(H)_{\omega}$. 
 Moreover, there is no conditional expectation $C_r^*(H)_{\omega}\to C_r^*(H)$ whenever $H$ is infinite. In fact, if $A$ is a unital separable $C^*$-algebra, then a conditional expectation $A_{\omega}\to A$ exists only if $A$ is finite dimensional. Indeed, for a nonprincipal ultrafilter $\omega$, suppose $\mathbb E: A_{\omega} \to A$ is a conditional expectation. Choose a MASA $C(K) \subset A$. If $(f_n) \in C(K, \mathbb R)$ is an increasing seuqnece, then $\mathbb E((f_n)_n)\in C(K)$ is easily seen to be a least upper bound for $(f_n)$. Hence, $C(K)$ is separable and sequentially monotone complete, which implies that $K$ is metrizable and extremally disconnected, and hence $K$ is a finite space. Since $C(K)$ is a finite dimensional MASA, $\text{dim}(A)\leq |K|^2<\infty$. 
    
\end{remark}

\section{Operator norm decay of averages in acylindrically hyperbolic groups}

Let $H<G$ be an infinite hyperbolically embedded subgroup.
In this section we show that elements $g \in G \setminus H$ exhibit averaging decay when conjugated by unitaries from a diffuse abelian subalgebra $\A \subset \mathcal L(H)$. This is a mixing phenomenon relative to the hyperbolically embedded subgroup $H$. 

\begin{thm}\label{normconv0}
Let $G$ be a countable group and $H<G$ an infinite hyperbolically embedded subgroup. Assume that $\A \subset \mathcal L(H)\subset \mathcal L(G)$ is a diffuse abelian von Neumann subalgebra.   
Let $g \in G \setminus H$. Then there exists a sequence of unitaries $(a_i)_{i \ge 1} \subset \mathscr U(\A)$ satisfying
\begin{equation}\label{zeroconv}
\lim_{n\rightarrow \infty }\left\| \frac{1}{n} \sum_{i=1}^n (a_i u_g a_i^*)\otimes (\bar a_i\bar u_g \bar a_i^*)\right\|=0, \end{equation}
where we consider the operator norm on $\mathfrak B(\ell^2 G\otimes \ell^2G)$.
\end{thm}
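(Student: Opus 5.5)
We reduce the operator norm to moments and then use Corollary \ref{nontrivial} to kill the mixed terms. Set $x_i = a_i u_g a_i^*$ and $T_n=\frac1n\sum_{i=1}^n x_i\otimes \bar x_i$, acting on $\ell^2G\otimes\ell^2 G$. This is the left regular representation of $G\times G$ tensored appropriately, so $T_n$ lies in $\mathcal L(G)\bar\otimes\overline{\mathcal L(G)}\cong \mathcal L(G\times G)$ with trace $\tau\otimes\bar\tau$. The spectral formula in a tracial von Neumann algebra, with faithful trace, gives
\[
\|T_n\|=\lim_{k\to\infty}\big((\tau\otimes\bar\tau)((T_n^*T_n)^k)\big)^{1/2k}.
\]
The plan is therefore to bound the moments $\tau\otimes\bar\tau((T_n^*T_n)^k)$ by something like $C_k n^{-k}$, or more precisely by (number of admissible index patterns)$/n^{2k}$. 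A Haagerup/free-probability type estimate then yields $\|T_n\|\lesssim 2/\sqrt n$, or at least $\to 0$. This is the same mechanism as for a sum of $n$ free Haar-type unitaries, whose norm is $2\sqrt{n-1}$, and it gives $\|T_n\|\le 2\sqrt{n}/n$.

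First, expand $(T_n^*T_n)^k$ into $n^{-2k}\sum_{i_1,\dots,i_{2k}} y\otimes\bar y$. Here $y=x_{i_1}^*x_{i_2}x_{i_3}^*x_{i_4}\cdots x_{i_{2k}}$, and the corresponding trace term is $|\tau(y)|^2$. Write
\[
y=a_{i_1}u_g^{-1}(a_{i_1}^*a_{i_2})u_g(a_{i_2}^*a_{i_3})u_g^{-1}\cdots .
\]
Between consecutive $u_{g^{\pm1}}$ we get the factor $a_{i_j}^*a_{i_{j+1}}$ whenever $i_j\ne i_{j+1}$, and the factor $1$ when $i_j=i_{j+1}$. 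In the latter case $u_g^{-1}u_g$ or $u_gu_g^{-1}$ cancels, and the word shortens.

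Second, choose the $a_i$ so that every factor $a_i^*a_j$ with $i\ne j$ is \emph{deep in $H$}. Let $K=\{g,g^{-1}\}$ and take $L\subset H$ finite from Corollary \ref{nontrivial}. We need $a_i^*a_j\in \mathcal L(H)$ to have Fourier support in $H\setminus L$, i.e. $\tau(a_i^*a_j u_h^*)=0$ for all $h\in L$ and $i\ne j$. This is exactly what Corollary \ref{orthunitariesinM} provides, applied to $\A\subset\mathcal L(G)$ with the finite set $F=\{u_h^*:h\in L\}$; note that $\A$ is diffuse since $H$ is infinite and $\A$ is assumed diffuse. Then, after fully cancelling adjacent equal indices (and cyclically rotating via the trace), any nontrivial reduced word $y$ is a finite linear combination, in $\|\cdot\|_2$ via Kaplansky-type approximation, of group elements of the form $g^{\pm1}h_1g^{\pm1}h_2\cdots$ with $h_j\in H\setminus L$. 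By Corollary \ref{nontrivial} none of these equals $1$, so $\tau(y)=0$. The bookkeeping needs care: after cancelling a pair $x_{i}x_i^*$, neighbours merge into $a_{i'}^*a_{i''}$. If $i'=i''$ this is $1$, and we continue cancelling. Hence $\tau(y)\ne0$ only when the index sequence reduces completely, as a cyclic word in the free group on the symbols $x_1,\dots,x_n$ with alternating signs. Thus $\tau(y)\in\{0,1\}$, with value $1$ exactly for such non-crossing, fully cancelling patterns.

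Third, count: the number of index tuples $(i_1,\dots,i_{2k})$ whose alternating word $x_{i_1}^*x_{i_2}\cdots x_{i_{2k}}$ reduces to $1$ in the free group is at most $C_k\, n^{k+1}$, with $C_k$ a Catalan-type constant of at most exponential growth $4^k$. Hence $\tau\otimes\bar\tau((T_n^*T_n)^k)\le 4^k n^{k+1}/n^{2k}$. Taking $2k$-th roots and letting $k\to\infty$ gives $\|T_n\|\le 2/\sqrt n\to0$. The main obstacle I expect is the second step: rigorously justifying that partially cancelled words are still alternating $g^{\pm1}$ with $H\setminus L$ letters, including the edge/cyclic cases and the possibility that $g^{\pm1}$ letters of equal sign become adjacent. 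The latter cannot actually happen, since the pattern $x^*x x^*x$ alternates and so cancellations preserve alternation. If the counting proves delicate, I would fall back on the cruder observation that nonvanishing forces each index to appear at least twice, which already gives $O(n^{k})$ terms and suffices.
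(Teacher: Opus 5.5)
Your proposal is correct and is essentially the paper's proof. It uses the same finite set $L$ from Corollary \ref{nontrivial}, the same unitaries from Corollary \ref{orthunitariesinM}, the spectral formula, the reduction of nonvanishing moment terms to fully cancelling (non-crossing) patterns, and a Catalan count; your looser $n^{k+1}$ bound still gives $\|T_n\|\le 2/\sqrt{n}$ as $k\to\infty$.

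Do not rely on your fallback, though. The spectral formula requires $k\to\infty$ at fixed $n$, so you need moment bounds whose dependence on $k$ is at most exponential. The number of tuples in which every index merely repeats grows superexponentially in $k$; for $k\gg n$ it is essentially $n^{2k}$, so that count yields only $\|T_n\|\le 1$.
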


\begin{proof}
Set $K := \{g, g^{-1}\} \subset G \setminus H$.  Theorem \ref{nontrivial} provides a finite symmetric set $L \subset H$ such that
\begin{equation}\label{nontrivial altwords}
\text{$1\notin (K(H \setminus L))^{m}$, for every $m\geq 1$}.
\end{equation} 
By Corollary \ref{orthunitariesinM} we can find a sequence of unitaries $(a_i)_i\subset\mathscr U(\mathcal A)$ such that 
\begin{equation}\label{a_icondition}
    \tau(a_i^*a_ju_h)=0,\text{ for all }i\neq j\text{ and all }h\in L.
\end{equation}

To prove\eqref{normconv0}, denote $T_n:=\frac{1}{n} \sum_{i=1}^n (a_i u_g a_i^*)\otimes (\bar a_i\bar u_g \bar a_i^*) \in \cL(G)\overline\otimes \cL(G)^{\text{op}}$, and observe that by the spectral formula we have 
\begin{equation}\label{spformula}
\|T_n\|=\lim_{m\rightarrow \infty} \tau\big((T_n^* T_n)^m\big)^{\frac{1}{2m}}.
\end{equation}
From the definition of $T_n$, for every $m\geq 1$ we have
\begin{equation}\label{momentest1}
\tau\big((T_n^* T_n)^m\big) = \frac{1}{n^{2m}} \sum^n_{i_1,\dots,i_m, j_1,\dots,j_m=1} 
|\tau(a_{i_1} u_{g^{-1}} a_{i_1}^* a_{j_1} u_{g} a_{j_1}^* \cdots  a_{i_m} u_{g^{-1}} a_{i_m}^* a_{j_m} u_{g} a_{j_m}^*)|^2.
\end{equation}
In what follows, integers $i_1,\ldots,i_k,j_1,\ldots,j_k\in\{1,\ldots, n\}$ are said to satisfy the \emph{neighboring mismatch condition} if $i_1\neq j_1\neq i_2 \neq j_2 \neq \cdots\neq  i_k \neq j_k \neq i_1$.

\vspace{1mm}

\begin{claim}\label{freenes} For all integers $i_1 , \ldots, i_k, j_1, \ldots ,j_k \in \{1,\ldots,n\}$ satisfying the neighboring mismatch condition, we have \[\tau(a_{i_1} u_{g^{-1}} a_{i_1}^* a_{j_1} u_{g} a_{j_1}^* \cdots  a_{i_k} u_{g^{-1}} a_{i_k}^* a_{j_k} u_{g} a_{j_k}^*)=0. \]   
\end{claim}
\noindent\emph{Proof of Claim \ref{freenes}.} 
 The neighboring mismatch condition and  \eqref{a_icondition} give that $a_{i_1}^*a_{j_1}, a^*_{j_1} a_{i_2}, \ldots, a^*_{j_k}a_{i_1}$ are supported on  $H\setminus L$. Hence
$w=u_{g^{-1}} a_{i_1}^* a_{j_1} u_{g} a_{j_1}^* \cdots  a_{i_k} u_{g^{-1}} a_{i_k}^* a_{j_k} u_{g} a_{j_k}^*a_{i_1}$ is supported on $(K(H\setminus L))^{k}$. Since $1\notin (K(H\setminus L))^{k}$ by   \eqref{nontrivial altwords}, we derive that $\tau(w)=0$. $\hfill\blacksquare$

\vspace{1mm}

Next we will analyze how many terms in the sum \eqref{momentest1} are nonzero. Fix an arbitrary monomial $$w=  u_{g^{-1}} a_{i_1}^* a_{j_1} u_{g} a_{j_1}^* \cdots  a_{i_m} u_{g^{-1}} a_{i_m}^* a_{j_m} u_{g} a_{j_m}^*a_{i_1}.$$ If  the neighboring mismatch condition    $i_1\neq j_1\neq i_2 \neq j_2 \neq \cdots\neq  i_m \neq j_m \neq i_1$ holds, then by Claim \ref{freenes} we get $\tau(w)=0$. Otherwise, there is $1\leq k\leq m$ such that $i_k=j_k$ or $j_k=i_{k+1}$, where by convention we put $i_{m+1}=i_1$. 
Assume for simplicity that $i_k=j_k$, for some $1\leq k\leq m$.
Then after cancellation, we have
\begin{align*}
    w&= u_{g^{-1}} a_{i_1}^* a_{j_1} u_{g} a_{j_1}^* \cdots a_{j_{k-1}} u_g a^*_{j_{k-1}}a_{i_k}u_{g^{-1}}a^*_{i_k} a_{j_k} u_g a^*_{j_k} a_{i_{k+1}}u_{g^{-1}}a^*_{i_{k+1}}\cdots a_{i_m} u_{g^{-1}} a_{i_m}^* a_{j_m} u_{g} a_{j_m}^*a_{i_1}\\
    &=u_{g^{-1}} a_{i_1}^* a_{j_1} u_{g} a_{j_1}^* \cdots  a_{j_{k-1}} u_g a^*_{j_{k-1}}a_{i_{k+1}}u_{g^{-1}}a^*_{i_{k+1}}\cdots a_{i_m} u_{g^{-1}} a_{i_m}^* a_{j_m} u_{g} a_{j_m}^*a_{i_1}.
\end{align*}
Notice that we have cancelled from $w$ two blocks, where we call a block an expression of the form $a_iu_ga_i^*$ or $a_iu_{g^{-1}}a_i^*$. If the neighboring mismatch condition holds for the new indices, that is, if $i_1\neq j_1\neq i_2 \neq j_2 \neq \cdots \neq j_{k-1}\neq i_{k+1}\neq \cdots \neq  i_m \neq j_m \neq i_1$, Claim \ref{freenes} gives that $\tau(w)=0$. Otherwise, we will have a similar cancellation between two neighboring $i$ and $j$ indices. 
The process will stop after finitely many steps, at which point we will either have:
\begin{enumerate}
    \item $\tau(w)=0$, if along the process we encounter a neighboring mismatch condition, or 
    \item $w=1$ and hence $\tau(w)=1$, if there are cancellations between all $i$-indices and $j$-indices.
\end{enumerate}
Every complete cancellation described in part (2) corresponds to a non-crossing partition of the set $\{1,\ldots,2m\}$ placed on a circle. In this configuration, odd numbers represent $i$-indices and even numbers represent $j$-indices. The partition consists of $m$ disjoint pairs, each containing one odd and one even element, such that no two pairs cross. For an example, see the picture below. 

\begin{center}
\begin{tikzpicture}[
    dot/.style={circle, fill=#1, inner sep=1.2pt},
    label distance=2pt
]

    \draw[thick] (0,0) circle (3cm);

    \foreach \ang/\name/\col/\lab in {
        70/j1/blue/a_{j_1}, 
        50/i2/red/a_{i_2}, 
        20/j2/blue/a_{j_2}, 
        0/i3/red/a_{i_3}, 
        -30/j3/blue/a_{j_3}, 
        -55/i4/red/a_{i_4}, 
        -90/j4/blue/a_{j_4}, 
        -125/i5/red/a_{i_5}, 
        -160/j5/blue/a_{j_5}, 
        -190/i6/red/a_{i_6}, 
        -220/ik/red/a_{i_{m}}, 
        -250/jk/blue/a_{j_m}, 
        -280/i1/red/a_{i_1}%
    } {
        \node[dot=\col, label=\ang:$\lab$] (\name) at (\ang:3cm) {};
    }

    \node at (-205:3.3cm) {\Large $\vdots$};

    \begin{scope}[gray, thick, bend right=20]
        \draw (i1) to (j4);
        \draw (j1) to (i2);
        \draw (j2) to (i4);
        \draw (j5) to (i6);
        \draw (jk) to (i5);
    \end{scope}
    \begin{scope}[gray, thick, bend left=20]
    \draw (j3) to (i3);
    \end{scope}
\end{tikzpicture}
\end{center}


    
    
    







    
    
    



The number of such non-crossing partitions is given by the $m$-th Catalan number, $C_m=\frac{1}{m+1}\binom{2m}{m}$. 
Moreover, for every such complete canceling scheme, i.e. for each non-crossing partition, there are at most $n^m$ possible monomials $w$ in the sum \eqref{momentest1}. This arises because each of the $m$ connecting arcs (blocks of size two) allows for $n$ independent choices, resulting in total $n^m$ combinations for a given scheme. 
Therefore, using this counting method
and equation \eqref{momentest1}, we get that
\begin{equation*}
    \tau\big((T_n^* T_n)^m\big) \leq  \frac{1}{n^{2m}} \cdot \frac{1}{m+1} \binom{2m}{m} \cdot n^m = \frac{1}{n^{m}} \cdot \frac{1}{m+1} \binom{2m}{m}
\end{equation*}
and thus
\begin{align*}
    \tau\big((T_n^* T_n)^m\big)^{\frac{1}{2m}} \leq\frac{1}{\sqrt{n}} \cdot \left(\frac{1}{m+1} \binom{2m}{m}\right)^{\frac{1}{2m}}.
\end{align*}

Since $\lim_m \left(\frac{1}{m+1} \binom{2m}{m}\right)^{\frac{1}{2m}}=2$, together with \eqref{spformula} we get that $\|T_n\|\leq\frac{2}{\sqrt{n}}$, which proves \eqref{zeroconv}. \end{proof}

\begin{remark}
Denote by $(b_i)_{i\geq 1}$  the free generators of $\mathbb F_\infty$ and by $\lambda,\rho$ the left and right regular representations of $\mathbb F_\infty$.
With this notation, the proof of Theorem \ref{normconv0} shows   $\|T_n\|=\|S_n\|$, where $$\text{$S_n=\frac{1}{n}\sum_{i=1}^n\lambda(b_i)\otimes\lambda(b_i)^{\text{op}}\in\cL(\mathbb F_\infty)\overline{\otimes}\cL(\mathbb F_\infty)^{\text{op}}$.}$$ Since $\lambda\otimes\rho$ is a multiple of $\lambda$, we further have that  $\|S_n\|=\|s_n\|$, where  $s_n=\frac{1}{n}\sum_{i=1}^n\lambda(b_i)\in\cL(\mathbb F_\infty)$. 
It is a classical result of Akemann-Ostrand \cite{AO76} that $\|s_n\|=\frac{2\sqrt{n-1}}{n}$, for every $n\geq 2$.
\end{remark}

The  proof of Theorem \ref{normconv0} also yields the following result, whose details we leave to the reader.   
\begin{cor}\label{normconv01}
Let $G$ be a countable group and $H<G$ an infinite hyperbolically embedded subgroup. Assume that $\A \subset \mathcal L(H)\subset \mathcal L(G)$ is a diffuse abelian von Neumann subalgebra.   
Let $g \in G \setminus H$. Then there exists a sequence of unitaries $(a_i)_{i \ge 1} \subset \mathscr U(\A)$ satisfying
\begin{equation*}
\lim_{n\rightarrow \infty }\left\| \frac{1}{n} \sum_{i=1}^n a_i u_g a_i^* \right\|= 0, \end{equation*}
where we consider the operator norm on $\mathfrak B(\ell^2 G)$.
\end{cor}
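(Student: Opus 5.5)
We follow the proof of Theorem \ref{normconv0} essentially verbatim, dropping the tensor factor $\cL(G)^{\rm op}$. Set $K=\{g,g^{-1}\}\subset G\setminus H$. By Corollary \ref{nontrivial} we obtain a finite symmetric set $L\subset H$ with $1\notin (K(H\setminus L))^m$ for all $m\geq 1$. By Corollary \ref{orthunitariesinM} we then choose unitaries $(a_i)_i\subset\mathscr U(\A)$ with $\tau(a_i^*a_ju_h)=0$ for all $i\neq j$ and $h\in L$. For $i\neq j$, the element $a_i^*a_j\in\cL(H)$ then has Fourier support in $H\setminus L$.

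Put $s_n=\frac1n\sum_{i=1}^n a_iu_ga_i^*$. By the spectral formula in the tracial algebra $\cL(G)$ (the trace is faithful, so $\|x\|=\lim_m\tau((x^*x)^m)^{1/2m}$), it suffices to bound
\[
\tau\big((s_n^*s_n)^m\big)=\frac1{n^{2m}}\sum_{i_1,\dots,i_m,j_1,\dots,j_m=1}^n \tau\big(a_{i_1}u_{g^{-1}}a_{i_1}^*a_{j_1}u_ga_{j_1}^*\cdots a_{i_m}u_{g^{-1}}a_{i_m}^*a_{j_m}u_ga_{j_m}^*\big).
\]
Unlike \eqref{momentest1}, the summands here appear without absolute values squared. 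They are, however, bounded in modulus by $1$, and this is all the estimate requires.

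Claim \ref{freenes} applies unchanged. If the neighboring mismatch condition holds, then after conjugating by $a_{i_1}$ the word is supported on $(K(H\setminus L))^m$, so its trace is $0$. Otherwise some adjacent indices coincide, and the corresponding adjacent blocks $a_iu_{g^{\mp1}}a_i^*\,a_iu_{g^{\pm1}}a_i^*$ cancel to $1$. Iterating this reduction, a nonzero term arises only when the cancellation is complete. Complete cancellations correspond to non-crossing pairings of the $2m$ positions on a circle, with each pair joining an $i$-position to a $j$-position, and there are at most $C_m$ of these. Each pairing leaves $n^m$ free index choices, and each surviving term has modulus at most $1$ (in fact it equals $1$).

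This gives $\tau((s_n^*s_n)^m)\le C_m n^{-m}$. Hence $\|s_n\|\le \lim_m C_m^{1/2m} n^{-1/2}=2/\sqrt n\to 0$.

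The main obstacle is the combinatorial bookkeeping. One must check that every index tuple with nonzero trace really arises from a complete non-crossing cancellation. One must also check that the count does not overcount in a harmful way: a tuple may be compatible with several pairings, but the estimate is only an upper bound, so overcounting is harmless. Both points are the same as in Theorem \ref{normconv0}, so no new difficulty arises.
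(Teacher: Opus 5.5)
Your proposal is correct and follows the same route as the paper. The paper states only that the proof of Theorem \ref{normconv0} also yields this corollary and leaves the details to the reader; your argument supplies exactly those details: the same $L$ and $(a_i)$, Claim \ref{freenes}, and the non-crossing cancellation count giving $\tau((s_n^*s_n)^m)\le C_m n^{-m}$. You also correctly note that the summands now appear as $\tau(w)\in\{0,1\}$ rather than $|\tau(w)|^2$.
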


\section{Proof of Theorem \ref{amenableabs1} and applications of amenable absorption}\label{applicationsam}

In this section we prove   Theorem \ref{amenableabs1} and discuss some applications.



\begin{thm}\label{amenableabs} Let $H<G$ be infinite groups with $H$  hyperbolically embedded in $G$. Then any amenable von Neumann subalgebra $\mathcal Q\subset \mathcal L(G)$ such that $\mathcal Q \cap  \mathcal L (H)$ is diffuse satisfies $\mathcal Q \subset \mathcal L(H)$.
\end{thm}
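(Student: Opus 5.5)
We follow Ozawa's argument from \cite{Oz15}. Since $\mathcal Q\cap\cL(H)$ is diffuse, it contains a diffuse abelian von Neumann subalgebra $\A\subset\mathcal Q\cap\cL(H)$; for instance, take a maximal abelian subalgebra of $\mathcal Q\cap\cL(H)$, which is diffuse because the ambient algebra is diffuse. We must show that every $x\in\mathcal Q$ satisfies $x-\mathbb E_{\cL(H)}(x)=0$. Argue by contradiction: suppose $x\in\mathcal Q$ has $y:=x-\mathbb E_{\cL(H)}(x)\neq 0$. Note that $y$ need not lie in $\mathcal Q$, but $x$ does, and $\mathbb E_{\cL(H)}(x)$ commutes appropriately with the averaging below only through the trace, so we work with $x$ directly.

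\textbf{Use of amenability.} Since $\mathcal Q$ is amenable, there is a $\mathcal Q$-central state $\phi$ on $\mathfrak B(L^2\cL(G))$ restricting to $\tau$ on $\cL(G)$. Equivalently, by Connes' characterization, for every unitary family $v_1,\dots,v_n\in\mathcal Q$ and $z_i\in\mathcal Q$,
\[
\Bigl|\sum_i\tau(z_i w_i)\Bigr|\le\Bigl\|\sum_i z_i\otimes \bar w_i\Bigr\|_{\cL(G)\otimes_{\min}\cL(G)^{\rm op}}
\]
for $z_i,w_i\in\mathcal Q$. Here the norm is the binormal/min-norm, which for $\cL(G)$ is dominated by the norm in $\mathfrak B(\ell^2G\otimes\ell^2G)$. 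Apply this with $z_i=a_ixa_i^*$ and $w_i=a_ix^*a_i^*$, where $a_i\in\mathscr U(\A)\subset\mathcal Q$. This gives
\[
\|x\|_2^2=\Bigl|\frac1n\sum_i\tau(a_ixa_i^*\,a_ix^*a_i^*)\Bigr|\le\Bigl\|\frac1n\sum_i a_ixa_i^*\otimes\overline{a_ixa_i^*}\Bigr\|.
\]
The remaining task is to show that the right-hand side, after subtracting the $\cL(H)$-part, tends to zero.

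\textbf{Approximation.} Split $x=\mathbb E_{\cL(H)}(x)+y$. The cross terms and the $\cL(H)$-part contribute at most $\|\mathbb E_{\cL(H)}(x)\|_2^2$ plus mixed terms. More cleanly, we replace the inequality by a bimodular version: apply it to $z_i=a_ixa_i^*$ and $w_i=a_iy^*a_i^*$, using that the state $\phi$ is $\cL(H)$-bimodular only on $\mathcal Q$. To bypass this difficulty, use Ozawa's form: for $x\in\mathcal Q$, $\phi$ yields $\tau(xy^*)$ bounded by the norm of averages $\frac1n\sum_i a_iya_i^*\otimes\overline{a_iya_i^*}$, since the $\cL(H)$-components average trivially under the orthogonality supplied by Corollary \ref{orthunitariesinM}. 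We then obtain $\|y\|_2^2=\tau(xy^*)\le\limsup_n\|\frac1n\sum a_iya_i^*\otimes\overline{a_iya_i^*}\|$.

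\textbf{Decay.} Approximate $y$ in $\|\cdot\|_2$, and with bounded operator norm by Kaplansky density, by finite sums $\sum_{g\in F}c_gu_g$ with $F\subset G\setminus H$ finite. Expanding the tensor average produces terms $\frac1n\sum_i a_iu_ga_i^*\otimes\overline{a_iu_{g'}a_i^*}$. For these we need the mixed version of Theorem \ref{normconv0}: run its proof with $K=F\cup F^{-1}$ and a single sequence $(a_i)$ chosen via Corollary \ref{orthunitariesinM} for the enlarged $L$. The same non-crossing-pairing moment count then gives norm $\lesssim |F|^2\cdot 2/\sqrt n$.

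\textbf{Main obstacle.} The main obstacle is making the passage from $\|\cdot\|_2$-approximation to operator-norm control legitimate. Ozawa handles this by working with the state $\phi$ and Cauchy--Schwarz, which makes the error $\le\|y-y_0\|_2\|y\|$ rather than operator-norm dependent. I would follow that route to conclude $\|y\|_2=0$.
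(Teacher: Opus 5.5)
There is a genuine gap, in the one step that uses amenability. Several pieces of your plan are sound. The identity $\|y\|_2^2=\tau(xy^*)$ for $y=x-\mathbb E_{\mathcal L(H)}(x)$ is correct. The mixed estimate for $\frac1n\sum_i a_iu_ga_i^*\otimes\overline{a_iu_{g'}a_i^*}$ does follow from the same non-crossing count, once $L$ is chosen for $K=F\cup F^{-1}$.

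The problem is that the inequality $|\tau(\sum_i z_iw_i^*)|\le\|\sum_i z_i\otimes\bar w_i\|$ holds only for $z_i,w_i\in\mathcal Q$.
\begin{itemize}
\item Applied to $z_i=w_i=a_ixa_i^*$, it is valid but useless. The part $h=\mathbb E_{\mathcal L(H)}(x)$ is never averaged away: if $H$ is abelian, then $a_iha_i^*=h$ for every $i$, and if $x\in\mathcal A$ the inequality just reads $\|x\|_2^2\le\|x\|^2$.
\item Your replacement bounds $\tau(xy^*)$ by $\|\frac1n\sum_i a_iya_i^*\otimes\overline{a_iya_i^*}\|$ ``since the $\mathcal L(H)$-components average trivially under the orthogonality of Corollary~\ref{orthunitariesinM}''. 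This has no justification. Since $y\notin\mathcal Q$, the amenability inequality does not apply to it. The orthogonality $\tau(a_i^*a_ju_h)=0$ for $h\in L$ only pushes the products $a_i^*a_j$ off $L$ inside words whose other letters lie in $G\setminus H$; it does nothing to terms like $a_iha_i^*$.
\end{itemize}
So, as written, nothing links the norm decay to $\|y\|_2$. The ``main obstacle'' you name is not the real difficulty.

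The missing idea is to project onto $\mathcal Q$ rather than onto $\mathcal L(H)$. Since $a_i\in\mathcal Q$, we have $\mathbb E_{\mathcal Q}(a_iya_i^*)=a_i\mathbb E_{\mathcal Q}(y)a_i^*$. Moreover, $\mathbb E_{\mathcal Q\overline{\otimes}\mathcal Q^{\rm op}}$ is contractive for the operator norm. Applying the amenability inequality to $z_i=w_i=a_i\mathbb E_{\mathcal Q}(y)a_i^*\in\mathcal Q$ therefore gives
\[
\|\mathbb E_{\mathcal Q}(y)\|_2^2\le\Bigl\|\frac1n\sum_{i=1}^n a_iya_i^*\otimes\overline{a_iya_i^*}\Bigr\|,
\qquad |\tau(xy^*)|=|\tau(x\,\mathbb E_{\mathcal Q}(y)^*)|\le\|x\|_2\,\|\mathbb E_{\mathcal Q}(y)\|_2 .
\]
This is exactly the paper's second proof. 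Its first proof reaches the same conclusion, $\phi(u_g^*\mathcal Q)=\{0\}$, from Ozawa's lemma.

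Once you have this, you no longer need Kaplansky approximation or a mixed version of Theorem~\ref{normconv0}. Take $y=u_g$ with $g\in G\setminus H$, and deduce $\mathbb E_{\mathcal Q}(u_g)=0$ from Theorem~\ref{normconv0}. By linearity and $\|\cdot\|_2$-continuity, $\mathbb E_{\mathcal Q}$ then vanishes on $\mathcal L(G)\ominus\mathcal L(H)$. Hence $\|y\|_2^2=\tau(x\,\mathbb E_{\mathcal Q}(y)^*)=0$ for every $x\in\mathcal Q$.
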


\begin{proof} 
We give two proofs. 
The first proof relies on an idea of Ozawa \cite{Oz15}. Put $\mathcal M=\mathcal L(G)$.
Since $\mathcal Q\subset\mathcal M$ is amenable, the canonical trace $\tau$ of $\mathcal M$ extends to a $\mathcal Q$-central state $\phi: \mathfrak B(L^2\mathcal M) \rightarrow \mathbb C$. 

Let $g\in G\setminus H$. Since $\mathcal Q\cap \mathcal L(H)$ is diffuse,  Theorem \ref{normconv0} implies that the norm closed convex hull of $\{au_ga^*\otimes \overline{a}\overline{u}_g\overline{a}^*\mid a\in\mathscr U(\mathcal Q\cap\mathcal L(H))\}$ contains $0$. Hence, the norm closed convex hull of $\{au_ga^*\otimes \overline{a}\overline{u}_g\overline{a}^*\mid a\in\mathscr U(\mathcal Q)\}$ also contains $0$.
 Since $\mathcal Q$ is amenable, \cite[Lemma]{Oz15} implies that $\varphi(u_g^*\mathcal Q)=\{0\}$.
 Since this holds for every $g\in G\setminus H$, we conclude that $\mathcal Q\subset\mathcal L(H)$.

 Our second proof derives the conclusion directly, without appealing to states. To prove the conclusion, it suffices to show that $\mathbb{E}_{\mathcal Q}(u_g)=0$, for every $g\in G\setminus H$. 
Fix $g\in G\setminus H$.
By Theorem \ref{normconv0}, we can find unitaries $(a_i)_{i\geq 1}\subset\mathscr U(\mathcal Q\cap\mathcal L(H))$ such that denoting
 $u_i=a_iu_ga_i^*$, for $i\geq 1$, we have $\|T_n\|\rightarrow 0$, where $T_n=\frac{1}{n}\sum_{i=1}^nu_i\otimes u_i^{\text{op}}\in \cL(G)\overline{\otimes}\cL(G)^{\text{op}}$. Since $\mathbb {E}_{\mathcal Q\overline{\otimes}\mathcal Q^{\text{op}}}$ is norm decreasing we get 
\begin{equation}\label{Tn}\left\|\frac{1}{n}\sum_{i=1}^n\mathbb{E}_\mathcal Q(u_i)\otimes\mathbb{E}_\mathcal Q(u_i)^{\text{op}}\right\|=\|\mathbb{E}_{\mathcal Q\overline{\otimes}\mathcal Q^{\text{op}}}(T_n)\|\rightarrow 0.\end{equation}
On the other hand, since $\mathcal Q$ is amenable we have that $|\tau(\sum_{i=1}^nx_iy_i^*)|\leq \|\sum_{i=1}^nx_i\otimes y_i^{\text{op}}\|$, for every $x_1,y_1,\dots,x_n,y_n\in \mathcal Q$.
Thus, we derive that
$$\left\|\frac{1}{n}\sum_{i=1}^n\mathbb{E}_\mathcal Q(u_i)\otimes\mathbb{E}_\mathcal Q(u_i)^{\text{op}}\right\|\geq \left|\frac{1}{n}\sum_{i=1}^n\tau(\mathbb{E}_\mathcal Q(u_i)\mathbb{E}_\mathcal Q(u_i)^*)\right|=\frac{1}{n}\sum_{i=1}^n\|\mathbb{E}_\mathcal Q(u_i)\|_2^2=\|\mathbb{E}_\mathcal Q(u_g)\|_2^2.$$
In combination with \eqref{Tn} we conclude that $\mathbb E_{\mathcal Q}(u_g)=0$, as desired.
\end{proof}

In the remaining part of this section we present several applications of our amenable absorption theorem. 


\begin{cor}\label{cortomain} Let $G$ be a group with a hyperbolically embedded infinite subgroup $H$.
Assume that $H$ is center-by-finite, i.e., $H/Z(H)$ is a finite group.
Then for any type II$_1$ amenable von Neumann subalgebra $Q\subset \mathcal L(G)$, the intersection $\mathcal Q \cap \mathcal L(H)$ is completely atomic.    
\end{cor}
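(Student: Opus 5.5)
Suppose, aiming for a contradiction, that $\mathcal Q\cap\mathcal L(H)$ is not completely atomic. Then there is a nonzero projection $p\in\mathcal Q\cap\mathcal L(H)$ such that $p(\mathcal Q\cap\mathcal L(H))p$ is diffuse. We will show this forces a nonzero corner of $\mathcal Q$ to sit inside $\mathcal L(H)$, and then use the structure of $\mathcal L(H)$ for center-by-finite $H$ to reach a contradiction with $\mathcal Q$ being of type II$_1$.

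\textbf{Step 1: absorbing a corner.} We need a version of Theorem \ref{amenableabs} in corners (Corollary \ref{absorptioncorner}). Its proof goes exactly as the second proof of Theorem \ref{amenableabs}, with everything compressed by $p$.
\begin{itemize}
\item Set $\mathcal Q_0=p\mathcal Qp$, which is amenable. The algebra $\mathcal A_0=p(\mathcal Q\cap\mathcal L(H))p$ is diffuse, so it contains a diffuse abelian subalgebra $\mathcal A_1$ with unit $p$.
\item The adapted version of Theorem \ref{normconv0} for $\mathcal A_1\oplus\mathbb C(1-p)$ uses only the orthogonality $\tau(a_i^*a_ju_h)=0$ for $h\in L$ and the word argument. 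It yields unitaries $a_i$ of $\mathcal A_1$ with
\[
\Bigl\|\frac1n\sum_i a_ipu_gpa_i^*\otimes\overline{(a_ipu_gpa_i^*)}\Bigr\|\to 0 .
\]
\item Rather than redo the moment count, I would take the unitaries $b_i=a_i+(1-p)$ in the abelian algebra $\mathcal A_1+\mathbb C(1-p)$. This algebra is not diffuse, but Corollary \ref{orthunitariesinM} can be applied to $\mathcal A_1$ alone. Cutting the resulting operator by $p\otimes\bar p$ gives the compressed statement.
\item Alternatively, I would choose the $a_i$ so that $\tau(a_i^*a_j\,p u_h p)=0$ for $h\in L$. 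The Claim \ref{freenes} argument then needs $p a_i^*a_j p$ supported in $H\setminus L$, which holds once $\mathbb E$ onto $\ell^2(L)$ kills it.
\end{itemize}
This step is the main obstacle; the cleanest route is the second bullet option applied to $x=pu_gp$.

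Once the compressed norm decay holds, the second proof of Theorem \ref{amenableabs}, applied inside $p\mathcal Mp$, gives $\mathbb E_{\mathcal Q_0}(pu_gp)=0$ for all $g\notin H$. Hence $p\mathcal Qp\subset p\mathcal L(H)p$.

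\textbf{Step 2: type contradiction.} Since $H/Z(H)$ is finite, $\mathcal L(H)$ is a finitely generated module over its center $\mathcal L(Z(H))\subset\mathcal Z(\mathcal L(H))$, with generators $u_{h_1},\dots,u_{h_k}$ for coset representatives. Therefore $\mathcal L(H)$ is of type I, and in fact a direct sum of homogeneous type I$_r$ pieces with $r\le k$ (this follows since $\dim$ over the center is at most $[H:Z(H)]$). Every corner $p\mathcal L(H)p$ is then also type I with bounded homogeneity. So $p\mathcal Qp$ is a von Neumann subalgebra of a type I$_{\le k}$ algebra, i.e. $p\mathcal Qp\subset M_r(L^\infty)$-type summands. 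Subalgebras of such algebras are again type I: they satisfy the standard polynomial identity of degree $2k$, and a II$_1$ algebra contains $M_{2^j}(\mathbb C)$ unitally for all $j$, which violates that identity. But $\mathcal Q$ is of type II$_1$, so $p\mathcal Qp$ is type II$_1$ and nonzero, which is a contradiction. Thus $\mathcal Q\cap\mathcal L(H)$ is completely atomic.
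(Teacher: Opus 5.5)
Your overall plan is right: pass to a nonzero projection $p\in\mathcal Q\cap\mathcal L(H)$ with $p(\mathcal Q\cap\mathcal L(H))p$ diffuse, absorb $p\mathcal Qp$ into $p\mathcal L(H)p$, and contradict type II$_1$ via the polynomial identity satisfied by $\mathcal L(H)$. Step 2 is correct. The gap is in Step 1: neither of your routes to the compressed norm decay works.

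\textbf{The route through $b_i=a_i+(1-p)$.} Here $(1-p)b_iu_gb_i^*(1-p)=(1-p)u_g(1-p)$ for every $i$. Hence $\bigl\|\frac1n\sum_i b_iu_gb_i^*\otimes\overline{b_iu_gb_i^*}\bigr\|\geq\|(1-p)u_g(1-p)\|^2$, which does not tend to $0$. Concretely, the orthogonality that drives the proof of Theorem \ref{normconv0} fails. Indeed $b_i^*b_j=a_i^*a_j+(1-p)$, the set $L$ must contain $1$ (otherwise $g\cdot 1\cdot g^{-1}\cdot 1=1$), and $\tau(1-p)\neq 0$. So there is no small operator to cut down by $p\otimes\bar p$.

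\textbf{The direct route with $a_i\in\mathscr U(\mathcal A_1)$.} If you run the moment count directly, Claim \ref{freenes} survives, but the cancellation step does not. For $i_k=j_k$ you get $a_{i_k}u_{g^{-1}}(a_{i_k}^*a_{i_k})u_ga_{i_k}^*=a_{i_k}u_{g^{-1}}pu_ga_{i_k}^*$, because $a_i^*a_i=p\neq 1$. The leftover $p$ has nonzero Fourier coefficients on $L$ (e.g.\ at $1$). So the resulting words are no longer alternating products of $g^{\pm1}$ with elements supported in $H\setminus L$, and Corollary \ref{nontrivial} no longer applies to them. The Catalan bound $C_mn^m$ on surviving terms is therefore lost; a crude count gives only the trivial bound $\|T_n\|\leq 1$.

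\textbf{The fix.} The missing idea is to fill the complement of $p$ with a diffuse algebra rather than with $\mathbb C(1-p)$. Since $H$ is infinite, $(1-p)\mathcal L(H)(1-p)$ contains a diffuse abelian subalgebra $\mathcal A_2$. Then $\mathcal A_1\oplus\mathcal A_2\subset\mathcal L(H)$ is diffuse abelian, and Theorem \ref{normconv0} applies to it verbatim. Compressing the resulting operator by $p\otimes\bar p$ gives the decay for $a_i=b_ip\in\mathscr U(\mathcal A_1)$ and $x=pu_gp$.

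Even more simply, you can bypass norm estimates altogether. The algebra $p\mathcal Qp\oplus\mathcal A_2$ is amenable, and its intersection with $\mathcal L(H)$ contains the diffuse algebra $\mathcal A_1\oplus\mathcal A_2$. So Theorem \ref{amenableabs} yields $p\mathcal Qp\subset p\mathcal L(H)p$. This is exactly the paper's proof of Corollary \ref{absorptioncorner}, which you could have cited outright. Combined with your Step 2, it gives the corollary.
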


\begin{cor}\label{absorptioncorner} Let $G$ be a group with a hyperbolically embedded infinite subgroup $H$. Let $p\in \mathcal L(H)$ be a nonzero projection. Then  any amenable von Neumann subalgebra $\mathcal{Q}\subset p\cL(G)p$ such that  $\mathcal{Q}\cap p \cL(H)p $ is diffuse satisfies $\mathcal{Q}\subset p\cL(H) p$.
\end{cor}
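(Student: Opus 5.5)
The plan is to rerun the second proof of Theorem \ref{amenableabs} inside the corner, replacing the unitaries $u_g$ by $pu_gp$. Put $\mathcal M=\cL(G)$, $\mathcal N=\cL(H)$, and let $\mathcal Q\subset p\mathcal Mp$ be amenable with $\mathcal B:=\mathcal Q\cap p\mathcal Np$ diffuse. Since $p\mathcal Mp$ is the $\|\cdot\|_2$-closed span of $\{pu_kp: k\in G\}$ and $pu_hp\in p\mathcal Np$ for $h\in H$, it suffices to show $\mathbb E_{\mathcal Q}(pu_gp)=0$ for every $g\in G\setminus H$. Then $\mathbb E_{\mathcal Q}(x)=\mathbb E_{\mathcal Q}(\mathbb E_{p\mathcal Np}(x))$ for all $x\in p\mathcal Mp$. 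Taking $x\in\mathcal Q$ gives $x=\mathbb E_{\mathcal Q}(\mathbb E_{p\mathcal Np}(x))$, so $\|x\|_2\le\|\mathbb E_{p\mathcal Np}(x)\|_2$, which forces $x\in p\mathcal Np$.

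The first step is to produce averaging unitaries. Choose a diffuse abelian von Neumann subalgebra $\mathcal A_0\subset\mathcal B$; a MASA of the diffuse algebra $\mathcal B$ will do. Set $\mathcal A=\mathcal A_0\oplus\C(1-p)\subset\mathcal N$, which is abelian. It need not be diffuse when $p\neq1$, so Theorem \ref{normconv0} does not apply verbatim.

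I therefore rerun its proof. First take the finite set $L\subset H$ from Corollary \ref{nontrivial} for $K=\{g,g^{-1}\}$. Then apply Theorem \ref{orthunitaries} on the diffuse algebra $\mathcal A_0$, with trace $\tau(p)^{-1}\tau$, to the finite set $\{\mathbb E_{\mathcal A_0}(pu_hp): h\in L\}$. This yields unitaries $b_i\in\mathscr U(\mathcal A_0)$ with $\tau(b_i^*b_ju_h)=0$ for $i\neq j$ and $h\in L$. Put $a_i=b_i+(1-p)\in\mathscr U(\mathcal A)$. Then $a_i^*a_j=b_i^*b_j+(1-p)$, which is not supported on $H\setminus L$, so Claim \ref{freenes} fails as stated.

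The remedy is to work with $v_i:=b_i(pu_gp)b_i^*=a_i(pu_gp)a_i^*$ directly, and to bound
\[
T_n=\frac1n\sum_{i=1}^n v_i\otimes\bar v_i
\]
through moments. Expanding $p=\sum_h \tau(pu_h^*)u_h$ makes each $v_i$ an $\ell^2$-combination of words in $u_g$ and elements of $\mathcal N$. This is the main obstacle: the projections $p$ sitting between blocks prevent the clean reduction to words in $(K(H\setminus L))^k$.

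My first attempt would avoid it entirely by passing to amplification. Choose $e\in\mathcal N$ a projection, and partial isometries in $\mathcal N$, to reduce to the case $p=1$. Concretely, if $\tau(p)\ge 1/k$, take partial isometries $w_1,\dots,w_k\in\mathcal N$ with $w_s^*w_s\le p$ and $\sum_s w_sw_s^*=1$; this is possible when $\mathcal N$ is diffuse enough, at least after passing to $\cL(H\times\Z/k)$ or an amplification. Then form $\tilde{\mathcal Q}=\sum_s w_s\mathcal Q w_s^*+$ (matrix units), an amenable subalgebra of $M_k(\mathcal M)\cong\cL(G\times\Z/k)$ in which $H\times\Z/k\hookrightarrow_h G\times\Z/k$, since finite-index enlargements preserve hyperbolic embedding. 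Its intersection with $M_k(\mathcal N)$ is diffuse, and Theorem \ref{amenableabs} applies there.

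If that bookkeeping fails, the fallback is the direct moment estimate above with $\mathcal A_0$. In that estimate the $(1-p)$ terms vanish because $v_i=pv_ip$. Expanding $pu_gp$ into $H$-supported coefficients, one must show mismatched traces vanish after truncating $p$ to finitely supported approximants. That truncation costs only an $\varepsilon$ in operator norm if one uses Kaplansky and enlarges $L$ accordingly, which gives $\|T_n\|\to0$.

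Once $\|T_n\|\to0$ is in hand, the amenability inequality
\[
\Big|\tau\Big(\sum_i x_iy_i^*\Big)\Big|\le\Big\|\sum_i x_i\otimes y_i^{\rm op}\Big\|
\]
for $\mathcal Q$ applies exactly as in the second proof. Each $b_i$ lies in $\mathcal Q$, so $\|\mathbb E_{\mathcal Q}(v_i)\|_2=\|\mathbb E_{\mathcal Q}(pu_gp)\|_2$, and this quantity is bounded by $\|T_n\|\to0$. Hence $\mathbb E_{\mathcal Q}(pu_gp)=0$, as required.
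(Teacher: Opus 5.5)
\textbf{There is a genuine gap.} Your reduction to $\mathbb E_{\mathcal Q}(pu_gp)=0$ is fine, and so is your final amenability step. The crucial estimate, that $\|\frac1n\sum_i v_i\otimes\bar v_i\|\to0$, is never established, and both routes you offer for it break down.

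In the amplification route, $\cL(G\times\Z/k)\cong\cL(G)\bar\otimes\cL(\Z/k)\cong\cL(G)^{\oplus k}$, which is not $M_k(\cL(G))$. Also, partial isometries $w_s\in\cL(H)$ with $w_s^*w_s\le p$ and $\sum_s w_sw_s^*=1$ need not exist. For example, if $\cL(H)$ is abelian (say $H=\langle a\rangle\hookrightarrow_h\mathbb F_2$), then $w_sw_s^*=w_s^*w_s\le p$ forces $p=1$.

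In the fallback route, Kaplansky's theorem gives only strong-operator ($\|\cdot\|_2$) approximation with norm bounds, never operator-norm approximation. The norm closure of $\C H$ is $C^*_r(H)$, and for instance $C^*_r(\Z)\cong C(\mathbb T)$ contains no nontrivial projections. Worse, since $b_i^*b_i=p\neq1$, the ``complete cancellation'' words in the moment expansion no longer collapse to $1$. Factors $u_{g^{-1}}pu_g$ survive, and the part of $p$ supported on $L$ (in particular $\tau(p)1$, since $1\in L$ necessarily) defeats the vanishing mechanism of Claim \ref{freenes}. So the Catalan count does not go through.

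\textbf{The missing idea.} You were one step away from it when you wrote $\mathcal A_0\oplus\C(1-p)$. Replace $\C(1-p)$ by a diffuse abelian subalgebra $\mathcal A_1\subset(1-p)\cL(H)(1-p)$, for example a MASA; it is diffuse because $\cL(H)$ is diffuse, as $H$ is infinite.

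The paper then applies Theorem \ref{amenableabs} directly to the amenable algebra $\mathcal P=\mathcal Q\oplus\mathcal A_1\subset\cL(G)$. Its intersection with $\cL(H)$ contains the diffuse algebra $(\mathcal Q\cap p\cL(H)p)\oplus\mathcal A_1$. Hence $\mathcal P\subset\cL(H)$, and so $\mathcal Q\subset p\cL(H)p$.

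Your own framework also closes with this fix. Theorem \ref{normconv0} now applies verbatim to the diffuse abelian algebra $\mathcal A_0\oplus\mathcal A_1$ and gives unitaries $a_i$ with $\|T_n\|\to0$. Since $p$ is a central projection of this algebra, $b_i:=pa_i\in\mathscr U(\mathcal A_0)\subset\mathcal Q$. Because compression by $p\otimes\bar p$ is contractive, the elements $v_i=pa_iu_ga_i^*p=b_i(pu_gp)b_i^*$ satisfy $\|\frac1n\sum_i v_i\otimes\bar v_i\|\le\|T_n\|\to0$. Your last paragraph then finishes the proof.
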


\begin{proof} Let $\mathcal A \subset (1-p)\cL(H)(1-p)$ be any diffuse amenable subalgebra. For instance, take $\mathcal A$ to be a MASA of $(1-p)\cL(H)(1-p)$. Then $\mathcal P := \mathcal Q \oplus \mathcal A$ is an amenable von Neumann subalgebra of $\cL(G)$ with $\mathcal P \cap \mathcal L(H)$ diffuse. By Theorem \ref{amenableabs} it follows that $\mathcal P\subset \cL(H)$, and thus $\mathcal{Q}\subset p\cL(H) p$.\end{proof}

\begin{prop} Let $\mathcal N \subset \mathcal M$ be von Neumann algebras such that for every amenable von Neumann subalgebra $\mathcal A \subset \mathcal M$ that intersects diffusely with $\mathcal N$ must be contained in $\mathcal N$. Let $\mathcal G\leqslant \mathscr U(\mathcal N)$ be any amenable group of unitaries such that $\mathcal G''$ is diffuse. Then for any amenable von Neumann algebra $\mathcal B$ such that $\mathcal G\subset \mathscr N _{\mathcal M}(\mathcal B)$ we must have that $(\mathcal B\mathcal G)''\subset \mathcal N$.
\end{prop}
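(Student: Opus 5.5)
The plan is to exhibit a single amenable von Neumann subalgebra of $\mathcal M$ that contains $(\mathcal B\mathcal G)''$ and meets $\mathcal N$ diffusely, and then apply the absorption hypothesis on $\mathcal N\subset\mathcal M$ to it.

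First I will identify $(\mathcal B\mathcal G)''$. Since every $v\in\mathcal G$ normalizes $\mathcal B$, the linear span of the products $bv$ with $b\in\mathcal B$ and $v\in\mathcal G$ is a unital $*$-algebra. Indeed, $(bv)^*=v^*b^*=(v^*b^*v)v^*$, which lies in $\mathcal B\mathcal G$, and $b_1v_1b_2v_2=b_1(v_1b_2v_1^*)v_1v_2\in\mathcal B\mathcal G$. Hence $\mathcal P:=(\mathcal B\mathcal G)''$ is the von Neumann algebra generated by $\mathcal B$ and $\mathcal G$. It contains $\mathcal G''\subset\mathcal N$, so $\mathcal P\cap\mathcal N\supset\mathcal G''$. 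A von Neumann algebra containing a diffuse subalgebra with the same unit is diffuse, so $\mathcal P\cap\mathcal N$ is diffuse. If $\mathcal G''$ is diffuse in a corner, the same statement holds for that corner; I will take the hypothesis in the natural unital sense.

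The key step is to show that $\mathcal P$ is amenable. The group $\mathcal G$ acts on $\mathcal B$ by the trace-preserving automorphisms $\mathrm{Ad}(v)$, and $\mathcal P$ is generated by $\mathcal B$ together with a group of unitaries normalizing it. I plan to build a $\mathcal P$-central state extending $\tau$ in two stages. Because $\mathcal B$ is amenable, there is a $\mathcal B$-central state $\psi$ on $\mathfrak B(L^2\mathcal M)$ extending $\tau$. Because $\mathcal G$ is amenable, I fix a left-invariant mean $m$ on $\ell^\infty(\mathcal G)$, viewing $\mathcal G$ as a discrete group. I then define
\[\phi(T)=m\bigl(v\mapsto \psi(v^*Tv)\bigr),\qquad T\in\mathfrak B(L^2\mathcal M).\]
Each functional $\psi\circ\mathrm{Ad}(v^*)$ is $v^*\mathcal Bv=\mathcal B$-central and still restricts to $\tau$ on $\mathcal M$, since $\tau$ is a trace. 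Averaging preserves both properties, so $\phi$ is $\mathcal B$-central and extends $\tau$. Invariance of $m$ gives $\phi(wTw^*)=\phi(T)$ for every $w\in\mathcal G$. Therefore $\phi$ is central for the norm-closed span of $\mathcal B\mathcal G$.

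To pass from that algebra to its weak closure $\mathcal P$, I use that $\phi|_{\mathcal M}=\tau$ is normal. Cauchy--Schwarz gives $|\phi(xT)-\phi(yT)|\le\|x-y\|_2\|T\|$, and the same bound holds on the right. Combining this with Kaplansky density, centrality extends to all of $\mathcal P$. Hence $\mathcal P$ is amenable, by Connes' characterization via hypertraces.

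The main obstacle is this amenability step. The only delicate points are that $\mathcal G$ need not be countable or closed, and that the invariant mean is taken on the discrete group; the construction above handles both. Once amenability is established, $\mathcal P$ is an amenable subalgebra of $\mathcal M$ intersecting $\mathcal N$ diffusely, so the hypothesis on $\mathcal N\subset\mathcal M$ gives $(\mathcal B\mathcal G)''=\mathcal P\subset\mathcal N$.
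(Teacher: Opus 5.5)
Your proof is correct and follows the same route as the paper: show $(\mathcal B\mathcal G)''$ is amenable, note that it meets $\mathcal N$ diffusely because it contains $\mathcal G''$ as a unital subalgebra, and apply the absorption hypothesis. The only difference is that the paper asserts the amenability of $(\mathcal B\mathcal G)''$ without proof, while you justify it by averaging a $\mathcal B$-central state with an invariant mean on $\mathcal G$; that justification is valid.
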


\begin{proof} Since $\mathcal B$ and $\mathcal G$ are amenable it follows that $(\mathcal B\mathcal G)''\subset \mathcal M$ is an amenable von Neumann subalgebra. Moreover, since the intersection $(\mathcal B \mathcal G )''\cap\mathcal N$ contains the diffuse von Neumann algebra $\mathcal G''$ as a unital subalgebra, the hypothesis assumption implies that $(\mathcal B \mathcal G )''\subseteq \mathcal N$.\end{proof}

When combined with Theorem \ref{amenableabs}, the last results implies the following corollary.


\begin{cor}\label{absorptionnormalizer} Let $G$ be a group with a hyperbolically embedded infinite subgroup $H$. Let $\mathcal{G}\leqslant\mathscr{U}(\cL(H))$ be any amenable group of unitaries with $\mathcal{G}''$  diffuse. Then any amenable von Neumann subalgebra $\mathcal{B}$ of $\mathcal L(G)$ such that $\mathcal{G}\subset\mathscr{N}_{\cL(G)}(\mathcal{B})$ satisfies $(\mathcal{BG})''\subset\cL(H)$.
\end{cor}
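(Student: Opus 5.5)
This follows by applying the preceding Proposition with $\mathcal M=\cL(G)$ and $\mathcal N=\cL(H)$. The only hypothesis of that Proposition that needs checking is the absorption property for $\mathcal N\subset\mathcal M$: every amenable von Neumann subalgebra $\mathcal A\subset\cL(G)$ with $\mathcal A\cap\cL(H)$ diffuse must lie in $\cL(H)$. Since $H$ is infinite and hyperbolically embedded in $G$, this is exactly Theorem \ref{amenableabs}. The remaining hypotheses are part of the statement: $\mathcal G\leqslant\mathscr U(\cL(H))$ is amenable with $\mathcal G''$ diffuse, $\mathcal B$ is amenable, and $\mathcal G\subset\mathscr N_{\cL(G)}(\mathcal B)$.

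For completeness, I would also spell out why $(\mathcal B\mathcal G)''$ is amenable, since the Proposition only asserts this briefly. Because $\mathcal G$ normalizes $\mathcal B$, the $*$-algebra spanned by the products $bu$ with $b\in\mathcal B$ and $u\in\mathcal G$ is closed under multiplication and adjoints. Hence $(\mathcal B\mathcal G)''$ is a quotient of a crossed product $\mathcal B\rtimes\mathcal G$ by the amenable group $\mathcal G$, viewed as a discrete group, acting by conjugation. Concretely, it is generated by $\mathcal B$ together with a unitary representation of $\mathcal G$ implementing the action. One can deduce amenability by a Følner/hypertrace argument: start from a $\mathcal B$-central state on $\mathfrak B(L^2\cL(G))$ extending $\tau$, average it along a Følner sequence of finite subsets of $\mathcal G$, and take a weak$^*$ limit point. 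This gives a state that is $\mathcal B$-central, $\mathcal G$-central and restricts to $\tau$, so $(\mathcal B\mathcal G)''$ is amenable by Connes' theorem. Note that $\mathcal G$ need not be countable here, so the Følner sequence should be replaced by a Følner net.

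Finally, $\mathcal G''\subset(\mathcal B\mathcal G)''\cap\cL(H)$ is diffuse, so the intersection is diffuse. Theorem \ref{amenableabs} then gives $(\mathcal B\mathcal G)''\subset\cL(H)$. I expect no real obstacle; the only mildly delicate point is the amenability of the algebra generated by $\mathcal B$ and its amenable normalizing group, which is the averaging argument above.
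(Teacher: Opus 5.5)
Your proposal is correct and follows the paper's argument: the corollary is the preceding Proposition applied to $\cL(H)\subset\cL(G)$, with the absorption hypothesis supplied by Theorem \ref{amenableabs}. The Proposition's proof is just that $(\mathcal B\mathcal G)''$ is amenable and contains the diffuse unital subalgebra $\mathcal G''\subset\cL(H)$, and your F{\o}lner-net averaging of a $\mathcal B$-central state extending $\tau$ is a valid way to justify the amenability step that the paper only asserts (your side remark calling $(\mathcal B\mathcal G)''$ a quotient of the crossed product $\mathcal B\rtimes\mathcal G$ is imprecise at the von Neumann level, since the canonical map need not extend normally, and can be dropped).
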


\subsection{Peterson--Thom conjecture}\label{PetersonThom}

A recurring theme in the study of maximal amenability is the extent to which amenable subalgebras are constrained by the ambient nonamenable von Neumann algebra. One particularly natural question is whether a diffuse amenable subalgebra determines a unique maximal amenable extension. Motivated by striking parallels between indecomposability and malnormality phenomena for groups with positive $L^2$-Betti number and rigidity phenomena in free group factors, Peterson and Thom formulated in \cite{PT11} the following deep conjecture. Their work built on earlier results of Ozawa, Popa, Peterson, and Jung \cite{OP07,Pe09,Jun07}, as well as on their own structural results for groups with positive $L^2$-Betti number \cite{PT11}.

\begin{conj} Fix $r>1$. If $\mathcal{Q}$ is a diffuse amenable von Neumann subalgebra of $\cL(\mathbb{F}_r)$, then there exists a unique maximal amenable von Neumann subalgebra $\mathcal{P}$ of $\cL(\mathbb{F}_r)$ with $\mathcal{Q}\subset \mathcal{P}$.
\end{conj}

We note that a positive answer to this conjecture would automatically recover several fundamental structural results for free group factors, including primeness \cite{Ge96}, the absence of Cartan subalgebras \cite{Voi96}, and, more generally, strong solidity \cite{OP07}. Moreover, it would yield substantial strengthenings of these properties; see, for instance, \cite{HJKE25}.

Several results on amenable absorption (e.g., \cite{HU15, Ho14, BH16}) confirmed the Peterson-Thom conjecture in specific cases and were widely seen as strong supporting evidence. This is because the conjecture can be reformulated in terms of amenable absorption: for every $r > 1$, any maximal amenable subalgebra $\:\mathcal{N} \subset \mathcal{L}(\mathbb{F}_r)$ should have the amenable absorbing property. Later, in \cite[Theorem 1.1]{Hay22}, Hayes introduced a new approach to proving the Peterson-Thom conjecture using Voiculescu’s free entropy dimension theory \cite{Voi94, Voi96} and techniques from random matrix theory, which required the use of the 1-bounded entropy \cite{Hay18}. The conjecture was ultimately proven true by Belinschi and Capitaine \cite{BC22}, and independently by Bordevane and Collins \cite{BC23}, who relied on the equivalent formulation of the conjecture within the framework of random matrix theory developed in \cite{Hay22}. Other proofs of the conjecture include \cite{CGvH,Parr}.

In our case, we interpret our results on amenable absorption for groups $G$ with hyperbolically embedded subgroups $H$ as supporting evidence for the following conjecture:

\begin{conj}\label{PetersonThomhyperbolic} (Peterson-Thom conjecture for hyperbolic groups) Let $G$ be any hyperbolic group. Then any two diffuse amenable von Neumann subalgebras $\mathcal A,\:\mathcal B\subset  \cL(G)$ such that $\mathcal A \cap\mathcal B$ is diffuse, generate a von Neumann subalgebra $\mathcal A \vee\mathcal B\subset \mathcal L(G)$ that is still amenable.
\end{conj}

Theorem \ref{amenableabs} confirms Conjecture \ref{PetersonThomhyperbolic} in several nontrivial cases; for example, when $\mathcal{A} \subset \mathcal{L}(G)$ is an arbitrary diffuse amenable subalgebra and $\mathcal{B} = \mathcal{L}(H)$ is any diffuse amenable subalgebra arising from a subgroup $H < G$. Indeed, since $\mathcal{B}$ is diffuse and amenable, $H$ must be infinite and amenable. In particular, $H$ is contained in an infinite maximal amenable subgroup $K < G$. By \cite{Osi06b}, such a group $K$ is hyperbolically embedded in $G$ and it is elementary, and thus, $K$ is a center-by-finite subgroup of $G$. Now, since $\mathcal{A} \cap \mathcal{B}$ is diffuse, we also have that $\mathcal{A} \cap \mathcal{L}(K)$ is diffuse. Applying Theorem \ref{amenableabs}, we conclude that $\mathcal{A} \subset \mathcal{L}(K)$. Since $\mathcal{B}$ is already contained in $\mathcal{L}(K)$, it follows that $\mathcal{A} \vee \mathcal{B} \subset \mathcal{L}(K)$. Finally, as $K$ is amenable, so is the von Neumann subalgebra $\mathcal{A} \vee \mathcal{B}$.

\end{document}